\title{Miyaoka--Yau equality and uniformization of log Fano pairs}
\author{Louis Dailly}
\address{Louis Dailly, Institut de Math\'ematiques de Toulouse, Universit\'e Paul Sabatier, 31062 Toulouse Cedex~9, France}
\email{\href{mailto:louis.dailly@univ-rennes.fr}{louis.dailly@univ-rennes.fr}}
\newcommand{\incl}[1][r]
  {\ar@<-0.2pc>@{^(-}[#1] \ar@<+0.2pc>@{-}[#1]}
\newcommand{\immouv}[1][r]
   {\ar@{}[#1] |*[o][F]{\hbox{%
         \vrule width 1.5mm height 0pt depth 0pt%
         \vrule width 0pt height .75mm depth .75mm%
         }}
     \ar@{^{(}->}[#1]}
\newcommand{\N}{\mathbb{N}}
\newcommand{\Z}{\mathbb{Z}}
\newcommand{\Q}{\mathbb{Q}}
\newcommand{\R}{\mathbb{R}}
\newcommand{\C}{\mathbb{C}}
\newcommand{\D}{\mathbb{D}}
\newcommand{\PP}{\mathbb{P}}
\newcommand{\reg}{\mathrm{reg}\,}
\newcommand{\orb}{\mathrm{orb}\,}
\newcommand{\End}{\mathrm{End}\,}
\newcommand{\PGL}{\mathrm{PGL}\,}
\newcommand{\inv}{\mathrm{inv}}
\newcommand{\ch}{\mathrm{ch}}
\theoremstyle{definition}
\newtheorem{theo}{Theorem}[section]
\newtheorem{prop}[theo]{Proposition}
\newtheorem{coro}[theo]{Corollary}
\newtheorem{defi}[theo]{Definition}
\newtheorem{lemm}[theo]{Lemma}
\newtheorem{rema}[theo]{Remark}
\newtheorem{exem}[theo]{Example}
\newtheorem*{ackn}{Acknowledgement}
\theoremstyle{plain}
\newtheorem*{theox}{Main Theorem}
\newenvironment{itemize*}
    {\begin{itemize}%
      \setlength{\itemsep}{0pt}%
      \setlength{\parskip}{0pt}}%
    {\end{itemize}}
\begin{document}

\begin{abstract}
Let $(X, \Delta)$ be a log Fano pair with standard coefficients. We show that if it satisfies the equality case in Miyaoka--Yau inequality, then its orbifold universal cover is a projective space.
\end{abstract}

\maketitle

\tableofcontents

\section{Introduction}

At the beginning of the 20th century, Poincar\'e and Koebe stated that the projective line $\PP^1(\C)$ is the unique Riemann surface which is compact connected, simply connected. In higher dimensions, several characterizations of complex projective spaces exist. We can think about Mori's characterization \cite{Mor79}: projective spaces are the only compact complex manifolds which have an ample tangent bundle, or Siu--Yau's one \cite{SY18}: projective spaces are the only manifolds which can be endowed with a Kähler metric whose bisectional holomorphic curvature is positive.

In another direction, we can give a characterization of projective spaces coming from uniformization theory.
Let us describe the universal cover of a compact complex manifold $X$ of dimension $n$, if it satisfies the following two geometric assumptions: 
\begin{itemize}
    \item $(KE_\lambda)$ : the manifold $X$ can be endowed with a Kähler--Einstein metric $\omega$: $\mathrm{Ric}\, \omega = \lambda \omega$, with $\lambda \in \R$.
    \item $(MY)$ : the manifold $X$ satisfies the so-called Miyaoka--Yau equality: 
    $$
    \left( 2(n+1) c_2(X) - n c_1(X)^2 \right) \cdot \left[ \omega \right]^{n-2} = 0.
    $$
\end{itemize}
Under these assumptions, the universal cover $\widetilde{X}$ of $X$ is biholomorphic to:
\begin{itemize}
    \item the complex projective space $\PP^n$ if $\lambda > 0$, 
    \item the complex affine space $\C^n$ if $\lambda = 0$,
    \item the unit ball $\mathbb{B}^n = \left\lbrace z \in \C^n \,\,|\,\, |z|<1 \right\rbrace$ if $\lambda < 0$.
\end{itemize}

In the case $\lambda > 0$, we get that the manifold $X$ is biholomorphic to $\PP^n$, as $X$ is a Fano manifold which is always simply connected. Hence, it yields the desired characterization of $\PP^n$.
Now, if we consider a non-trivial finite group $G$, it can be realized as a subgroup of $\PGL(n+1, \C)$ for some integer $n$, and we can consider the orbit space $\PP^n / G$. The group $G$ does not act freely, and the quotient map $\PP^n \longrightarrow \PP^n / G$ is ramified. The space $\PP^n / G$ is a normal projective variety with quotient singularities endowed with a branching divisor $\Delta_G$. We get an orbifold pair $(\PP^n / G, \Delta_G)$ that is uniformized by $\PP^n$.
A natural question is: can we adapt the previous characterization of quotients to the pair framework ? More precisely, if a pair $(X, \Delta)$ satisfies singular versions of $(KE_\lambda)$ and $(MY)$, can we describe its universal cover $\left( \widetilde{X}_\Delta, \widetilde{\Delta} \right)$ ?

In the case $\lambda \leq 0$, it has been shown \cite[Thms.~A and B]{CGG24} that such characterizations hold. In the case $\lambda > 0$, the uniformization result holds when $\Delta = 0$ as a consequence of \cite[Thm.~1.5]{GKP22} and \cite[Thm.~B]{DGP24}.
The aim of the present paper is to investigate the case $\lambda > 0$ with a boundary divisor $\Delta$.

On the one hand, the authors of \cite{GKP22} show the uniformization result by replacing the assumption $(KE_{>0})$ with the condition of semistability of the canonical extension, which is an extension of the tangent sheaf $\mathcal{T}_X$ by $\mathcal{O}_X$. On the other hand, the authors of \cite{DGP24} show that the existence of a singular K\"ahler--Einstein metric implies the above mentioned semistability.

The aim of our article is to show the following theorem, which is analogous to \cite[Thm.~1.3]{GKP22} with a boundary divisor $\Delta$.

\begin{theox}
    Let $(X, \Delta)$ be a log Fano pair of dimension $n$.
    There exists a finite group $G \subseteq \PGL(n+1, \C)$, such that $(X, \Delta) \simeq (\PP^n / G, \Delta_G)$ if and only if the following two conditions hold:\\
    $(i)$ The canonical extension $\mathcal{E}_{(X, \Delta)}$ is semistable with respect to $-(K_X + \Delta)$,\\
    $(ii)$ The pair $(X, \Delta)$ satisfies the Miyaoka--Yau equality:
    $$
    (2(n+1) c_2(X, \Delta) - n c_1(X, \Delta)) \cdot c_1(X, \Delta)^{n-2} = 0.
    $$
\end{theox}

\emph{Remarks on the theorem:}

It is a priori unclear what object $\mathcal{E}_{(X, \Delta)}$ is and what semistability means in assumption $(i)$. In the following, the assumption $(i)$ is: \emph{There exists a strictly $\Delta$-adapted morphism $f_0 \colon Y_0 \longrightarrow X$ such that $\mathcal{E}_{X, \Delta, f_0}$ is semistable with respect to $f_0^* (-(K_X + \Delta))$.}

This assumption is related to the notion of \emph{adapted canonical extension} $\mathcal{E}_{X, \Delta, f}$ for a strictly $\Delta$-adapted morphism $f \colon Y \longrightarrow X$. This notion is introduced in the third section. It is a reflexive sheaf on $Y$ which is an extension of the adapted tangent sheaf $\mathcal{T}_{X, \Delta, f}$ by $\mathcal{O}_Y$. This is a notion similar to the adapted differentials $\Omega^{[1]}_{X, \Delta, f}$ introduced by Miyaoka and Campana (see \cite[Sect.~I.3]{CKT16}).

The second assumption $(ii)$ is a singular version of $(MY)$ that uses the definition of orbifold Chern classes initiated by Mumford. Here, we use an equivalent definition using orbifold de Rham cohomology, see \cite[Sect.~3]{CGG24}.

The question of uniformization in \cite{GKP22}, \cite{DGP24}, \cite{CGG24} has been considered in the more general setting of klt pairs. A part of the proof of our theorem is devoted to show that a pair satisfying assumptions $(i)$ and $(ii)$ has only quotient singularities.\\

\emph{Strategy of the proof:}

The strategy of the proof is based on \cite[Sect.~5.2]{GKP22}. In this article, the authors introduce a notion of canonical extension $\mathcal{E}_X$ for a klt space $X$. Then, they show that the Miyaoka--Yau equality implies projective flatness of $\mathcal{E}_X$ on the regular locus of the variety, hence flatness of $\End(\mathcal{E}_X)$. Hence, they introduce the "maximally quasi-\'etale" cover $\pi \colon Y \longrightarrow X$ \cite[Thm.~1.14]{GKP16} that plays the role of a global uniformization of $X$. The goal is to show that $Y = \PP^n$. As previously, we have the flatness of the endomorphism bundle associated $\End(\mathcal{E}_Y)$ over $Y_\reg$. Because $Y$ is maximally quasi-\'etale, $\End(\mathcal{E}_Y)$ extends to a locally free sheaf on $Y$. As a consequence of the Zariski--Lipman theorem for klt spaces, $Y$ is smooth as $\mathcal{T}_Y$ is locally a direct summand of $\End(\mathcal{E}_Y)$. It is simply connected because it is a Fano manifold. Hence, there exists a line bundle $L$ on $Y$ such that $\mathcal{E}_Y = L^{\oplus n+1}$, $\mathcal{E}_Y$ being projectively flat on a simply connected manifold. Then, we have $c_1(Y) = c_1(\mathcal{E}_Y) = (n+1) c_1(L)$ and we conclude by Kobayashi--Ochiai's characterization of $\PP^n$.

In the pair setting, two main difficulties arise from the presence of a boundary divisor $\Delta$:
\begin{itemize}
    \item The universal cover $(\widetilde{X}_\Delta, \widetilde{\Delta})$ of an orbifold could still have singularities in codimension 1 given by the boundary divisor $\widetilde{\Delta}$.
\end{itemize}

Under the assumptions of the theorem, a Zariski--Lipman type argument is needed to show that the pair is smooth i.e $\widetilde{\Delta} = 0$ and $\widetilde{X}_\Delta$ is smooth. In the proof of \cite{CGG24} for the case $\lambda < 0$, this is obtained by arguments coming from uniformization of Riemannian orbifolds, using the constancy of the curvature. In the case $\lambda = 0$ in the same article, the authors use the existence of an orbi-\'etale map via the existence of a section of some large power of $K_X + \Delta$. Recently, the authors of \cite{GP24} study system of Hodge-bundles in the pair setting to study quotient of bounded symmetric domains.

In our article, we state in the first section a criterion related to the notion of orbi-locally freeness of the tangent bundle. Then, we construct the canonical extension $\mathcal{E}_{(X, \Delta)}$ of an orbifold $(X, \Delta)$. It is an extension of $\mathcal{T}_{(X, \Delta)}$ by $\mathcal{O}_X$. From this criterion, it remains to show that the orbibundle $\End(\mathcal{E}_{(X, \Delta)})$ is flat in order to prove that $(X, \Delta)$ is developable.
\begin{itemize}
    \item In order to describe the geometry of the pair $(X, \Delta)$, we need to deal with orbisheaves and orbibundles that are virtual objects on $X$, as they make sense in local uniformizations.
\end{itemize}

A way to deal with concrete sheaves is to pull them back on strictly adapted coverings $f \colon Y \longrightarrow (X, \Delta)$. We give a general construction of the adapted sheaf $\mathcal{F}_{X, \Delta, f}$ associated to an orbibundle $\mathcal{F}$ on $(X, \Delta)$, and compute orbifold Chern classes for such sheaves.

In order to show that a klt pair $(X, \Delta)$ is orbifold under assumptions $(i)$ and $(ii)$ of the theorem, we construct a smooth orbi-\'etale orbistructure following the strategy of \cite{CGG24}. We introduce the above mentioned adapted canonical extension $\mathcal{E}_{X, \Delta, f}$ to show that the local covers $Y$ are smooth away from a divisor. This canonical extension is a sheaf on $Y$, obtained as an extension of the adapted tangent bundle $\mathcal{T}_{X, \Delta, f}$ by $\mathcal{O}_Y$, that takes into account the boundary divisor and additional ramifications. 

Moreover, we show that some classical adapted sheaves constructed in the literature (the cotangent sheaf $\Omega^{[1]}_{X, \Delta, f}$, the tangent sheaf $\mathcal{T}_{X, \Delta, f}$ or the canonical extension $\mathcal{E}_{X, \Delta, f}$) are adapted sheaves associated to orbibundles on $(X, \Delta)$.\\

\emph{Organisation of the paper:}

In the first part, we recall some useful facts on orbifolds, and give a construction of the canonical extension $\mathcal{E}_{(X, \Delta)}$ of an orbifold $(X, \Delta)$. Then, we state the above mentioned criterion of developability for a pair $(X, \Delta)$.

In a second part, we deal with the klt pair setting. We first introduce the adapted canonical extension $\mathcal{E}_{X, \Delta, f}$ where $f \colon Y \longrightarrow X$ is a strictly $\Delta$-adapted morphism, and we construct a canonical extension $\mathcal{E}_{(X, \Delta)}$ for a klt pair. Then, we give another interpretation of this sheaf in terms of pullback of the orbisheaf $\mathcal{E}_{(X, \Delta)}$ above the orbifold locus, that provides us with a way to compute orbifold Chern classes of this sheaf.

The third part is devoted to the proof of the theorem.

The last part introduce some examples of pairs $(\PP^n / G, \Delta_G)$, like weighted projective spaces, and we classify log smooth quotients of $\PP^2$ by computing explicitly Miyaoka--Yau discriminants.

\begin{ackn}
This work is a part of my PhD thesis. I warmly thank my advisors Benoît Claudon and Henri Guenancia for their support through discussions, suggestions during my research and corrections of the present paper.
\end{ackn}

\section{Orbifolds}

\subsection{Generalities on pairs}

In this section, we recall some useful definitions following the exposition of \cite{CGG24}.

\subsubsection{Pairs and orbistructures}

In order to keep notation light, we use same notation for a variety $X$ and its underlying complex space. The fundamental group of such a variety $\pi_1(X)$ will be the topological fundamental group of the associated analytic space.

\begin{defi}
    A pair $(X, \Delta)$ is the datum of a normal projective variety (or a normal analytic space) $X$ and a Weil $\Q$-divisor $\Delta$ with standard coefficients. It means that it is of the form $\displaystyle  \Delta := \sum_i \left( 1 - \frac{1}{m_i} \right) \Delta_i$ where $\Delta_i$ are irreducible divisors such that the union is locally finite, and we have $m_i \in \N_{\geq 2}$.

    We will denote by $X^* := X_\reg \backslash |\Delta|$ \emph{the regular locus of $(X, \Delta)$}.
\end{defi}

\begin{defi}
    A morphism $f \colon Y \longrightarrow X$ between complex normal spaces is called \emph{quasi-\'etale} (resp.~\emph{quasi-flat}) if there exists an analytic subset $Z \subseteq X$ of codimension at least 2 such that $f$ is \'etale (resp. flat) over $X \backslash Z$.
\end{defi}

Quasi-\'etaleness and quasi-flatness are invariant under finite base change. More generally, a finite map between normal varieties $f \colon Y \longrightarrow X$ is quasi-flat, by studying normal forms of $f$ at general points of the components of codimension 1 of the branch locus.

\begin{defi}
    We say that a finite morphism $f \colon Y \longrightarrow X$ between complex normal spaces is \emph{Galois} if there exists a finite group $G \subseteq \textrm{Aut}(Y)$ such that $f$ is isomorphic to the quotient map $\pi_G \colon Y \longrightarrow Y/G$.
\end{defi}

\begin{defi}
    Let $(X, \Delta)$ be a pair and let $Y$ be a normal analytic space. We say that a Galois morphism $f \colon Y \longrightarrow X$ is:
    \begin{itemize}
        \item[•] \textit{$\Delta$-adapted} if for all $i$, there exists a reduced divisor $\Delta'_i$ on $Y$ and $a_i \in \N_{\geq 1}$ such that $f^* \Delta_i = m_i a_i \Delta'_i$ 
        \item[•] \textit{strictly $\Delta$-adapted} if $f$ is $\Delta$-adapted and for all $i$, $a_i = 1$.
        \item[•] \textit{orbi-\'etale} if $f$ is strictly $\Delta$-adapted and \'etale over $X^*$.
    \end{itemize}
\end{defi}

\noindent
A morphism $f \colon Y \longrightarrow X$ is orbi-\'etale for the pair $(X, 0)$ if and only if it is quasi-\'etale.

\begin{defi}\cite[Def.~9]{CGG24}\label{deforbistructure}
    Let $(X, \Delta)$ be a pair. An \textit{orbistructure on $(X, \Delta)$} is a collection $\mathcal{C}$ of triples $(U_\alpha, V_\alpha, \eta_\alpha)$ called local uniformizing charts (l.u.c.) such that:
    \begin{itemize}
        \item[•] $(U_\alpha)_\alpha$ is an open cover of $X$,
        \item[•] $V_\alpha$ is a normal complex space,
        \item[•] $\eta_\alpha \colon V_\alpha \longrightarrow U_\alpha$ is an adapted morphism for $(U_\alpha, \Delta_{| U_\alpha})$, satisfying the following compatibility condition. If we denote by $V_{\alpha\beta}$ the normalization of $V_\alpha \underset{X}{\times} V_\beta$, then the natural morphisms $g_{\alpha\beta} \colon V_{\alpha\beta} \longrightarrow V_\alpha$ are quasi-\'etale.
    \end{itemize}
    We say that an orbistructure is \textit{strict} (resp. \textit{orbi-\'etale}) if each $\eta_\alpha$ is strictly-adapted (resp. orbi-\'etale). The orbistructure is said \emph{smooth} if each $V_\alpha$ is smooth. If $(X, \Delta)$ is endowed with a smooth orbi-\'etale orbistructure, we say that $(X, \Delta)$ is an \emph{orbifold}.
\end{defi}

\begin{rema}
    When $(X, \Delta)$ is endowed with a smooth orbi-\'etale orbistructure, the branch locus of the maps $g_{\alpha\beta}$ is of pure codimension 1 by Nagata's purity of the branch locus. Since the maps $g_{\alpha\beta}$ are quasi-\'etale, they are actually \'etale.
    More generally, when $\alpha = \beta$, we can observe that there exists a surjective map $(x, g) \in V_\alpha \times G_\alpha \longmapsto (x, g \cdot x) \in V_{\alpha} \underset{X}{\times} V_{\alpha}$ that produces a morphism $V_\alpha \times G_\alpha \longrightarrow V_{\alpha\alpha}$ by universal properties of normalization. But this morphism is finite of degree one above the \'etale locus of $\eta_\alpha$. By Zariski's main theorem, it is an isomorphism so that $V_{\alpha\alpha} = V_\alpha \times G_\alpha$, and the projections are \'etale.
\end{rema}

\begin{defi}
    Let $(X, \Delta)$ be a pair. We denote by $(X, \Delta)_\orb$ \emph{the orbifold locus of $(X, \Delta)$}. It is the biggest open subset $U \subseteq X$ such that $(U, \Delta_{|U})$ can be endowed with a smooth orbi-\'etale orbistructure.
\end{defi}

\subsubsection{Orbifold morphism and pullback orbistructure}

We have a notion of orbifold morphism between pairs endowed with orbistructures.

\begin{defi}\label{orbifoldmorphism}
    Let $(Y, \Delta'), (X, \Delta)$ be two pairs, endowed with orbistructures $\mathcal{D}$ and $\mathcal{C}$ respectively. We say that a morphism $\pi \colon Y \longrightarrow X$ is \textit{orbifold at $y \in Y$} if for each l.u.c. $(U,V, \eta) \in \mathcal{C}$ around $\pi(y)$, there exists a l.u.c. $(T, W, \mu) \in \mathcal{D}$ around $y$ and a morphism $\pi_{\eta \mu} \colon W \longrightarrow V$ such that $\eta \circ \pi_{\eta \mu} = \pi \circ \mu$.
\end{defi}

When we have an orbifold morphism $\pi \colon (Y, \Delta') \longrightarrow (X, \Delta)$, and two l.u.c. denoted by $(U_\alpha, V_\alpha, \eta_\alpha) \in \mathcal{C}$ and $(T_{\alpha'}, W_{\alpha'}, \mu_{\alpha'}) \in \mathcal{D}$, we will denote $\pi_{\alpha\alpha'} := \pi_{\eta_\alpha \mu_{\alpha'}}$ in order to simplify notation.

\begin{rema}
    Here we follow the definition of maps from \cite{Sat56}. But there are several definitions of orbifold morphisms (see \cite{MP97} for the definition of strong morphisms).
\end{rema}

\begin{rema}\label{remarkoforbifoldmorphism}
     If we have such an orbifold morphism $\pi \colon (Y,\Delta') \longrightarrow (X, \Delta)$, we get a morphism $\pi_{\alpha\beta, \alpha'\beta'} \colon W_{\alpha'\beta'} \longrightarrow V_{\alpha\beta}$ by universal properties of fiber product and normalization:
     $$
    \xymatrix {
    W_{\alpha\beta} \ar@{.>}[rrrr]^{\exists \, \pi_{\alpha\beta, \alpha'\beta'}} \ar[dd]_{l_{\alpha'\beta'}} \ar[dr]^{l_{\beta'\alpha'}} &&&& V_{\alpha\beta} \ar[dr] \ar[dd] |!{[dl];[dr]}\hole & \\
    & W_{\beta'} \ar[rrrr]_{\pi_{\beta\beta'}} \ar[dd]^--(.35){\mu_{\beta'}} &&&& V_\beta \ar[dd] \\
    W_{\alpha'} \ar[rrrr]_{\pi_{\alpha\alpha'}} |!{[ur];[dr]}\hole \ar[dr]_{\mu_{\alpha'}} &&&& V_\alpha \ar[rd] & \\
    & Y \ar[rrrr]_\pi &&&& X \\
  }
    $$    
\end{rema}

There is a way to pull back orbistructures by $\Delta$-adapted morphisms.

\begin{defi}\label{pullbackstructure}
    Let $(X, \Delta)$ be a pair with an orbi-\'etale orbistructure $\mathcal{C}$, and $\pi \colon Y \longrightarrow X$ be a $\Delta$-adapted morphism. The \emph{pull-back orbistructure}, denoted by $\mathcal{C}_\pi$ is an orbistructure on $(Y, 0)$ given by the triples $(\pi^{-1}(U_\alpha), W_\alpha, \mu_\alpha )$, where $W_\alpha$ is the normalization of $Y \underset{X}{\times} V_\alpha$, and $\mu_\alpha$ is the morphism $W_\alpha \longrightarrow Y$. 
\end{defi}

One can show that the collection defining $\mathcal{C}_\pi$ is an orbistructure because it satisfies the compatibility condition on intersections. Indeed, let $l_{\alpha\beta} \colon W_{\alpha\beta} \longrightarrow W_\alpha$ be the maps from the normalization of $W_\alpha \underset{Y}{\times} W_\beta $ to $W_\alpha$. In normal forms above general points of $\Delta$ and additional branching locus, we see that the morphisms $l_{\alpha\beta}$ are \'etale. It follows that $l_{\alpha\beta}$ are quasi-\'etale.

Moreover, the morphism $\pi$ is naturally an orbifold morphism with respect to orbistructures $\mathcal{C}_\pi$ and $\mathcal{C}$.

\subsubsection{Fundamental group and universal cover in the orbifold setting}

\begin{defi}
    Let $(X, \Delta)$ be a pair. Locally around a general point of an irreducible component $\Delta_i$, the model of $X^*$ is $\D^* \times \D^{n-1}$ (here $\D := \left\lbrace z \in \C \,\,|\,\, |z| < 1 \right\rbrace$) and its fundamental group is $\Z$. Let us denote by $\gamma_i$ a loop that generates this group and by $\langle\langle \gamma_i^{m_i} \rangle\rangle$ the normal subgroup of $\pi_1(X^*)$ generated by the homotopy classes of the loops $\gamma_i^{m_i}$. The \textit{orbifold fundamental group} $\pi_1^{\orb}(X, \Delta)$ is the group defined by:
    $$
    \pi_1^{\orb}(X, \Delta) := \pi_1(X^*) / \langle\langle \gamma_i^{m_i} \rangle\rangle.
    $$
\end{defi}

We refer to \cite[Section~2.3]{CGG24} for definitions of coverings branched along $\Delta$ and the geometric Galois correspondence between covers branched along $\Delta$ and subgroups of $\pi_1^\orb(X, \Delta)$. From this correspondence, the definition of universal cover follows.

\begin{defi}
    Let $(X, \Delta)$ be an orbifold. \textit{The universal cover of $(X, \Delta)$} is the orbifold $( \widetilde{X}_\Delta, \widetilde{\Delta})$ corresponding to the subgroup $\lbrace 1 \rbrace \subseteq \pi_1^{\orb}(X, \Delta)$ under the correspondence of Theorem 2.16 of the above mentioned article.
\end{defi}

\begin{rema}\label{orbistructureofuniversalcover}
    The smooth orbi-\'etale orbistructure on $(\widetilde{X}_{\Delta}, \widetilde{\Delta})$ is given by a collection $(\widetilde{U}_\alpha, V_\alpha, \widetilde{\eta}_\alpha)$, where $V_\alpha$ are the same spaces that define the orbistructure on $(X, \Delta)$, and morphisms $\widetilde{\eta}_\alpha$ are quotient maps associated to a subgroup $H_\alpha$ of $G_\alpha$.
\end{rema}

\subsection{Canonical extension of an orbifold \texorpdfstring{$(X, \Delta)$}{(X,D)}}

In this section, we construct the canonical extension of an orbifold $(X, \Delta)$.

\subsubsection{Generalities on orbisheaves}

In this section, we introduce a notion of orbisheaves, which is slightly different from the definition of \cite[Def.~2.18]{GT22}, as we ask for an isomorphism between the reflexive pullbacks (it means the reflexive hulls of pullbacks) on intersections. Our definition is well-suited to our context because    we consider only finite morphisms that preserve reflexive sheaves on big open subsets. This is a consequence of the quasi-flatness of finite morphisms, and the fact that pulling back sheaves by flat morphisms preserve reflexive sheaves. Hence, we can identify reflexive hulls on the whole space. This procedure has been used to identify adapted sheaves with some reflexive pullbacks (e.g., see \cite[Lemma.~18]{CGG24}). Moreover, that is why the definition of pullbacks of orbisheaves we introduce requires finiteness of morphisms.

Our version of orbisheaves is weaker than the one introduced by Guenancia and Taji, but it is not well-behaved with more singular orbisheaves (namely, taking the reflexive hull annihilates torsion phenomena). Moreover, not taking reflexive hulls defines pullbacks of orbisheaves in a more natural way without requiring finiteness.

Both versions coincide when the transition morphisms $g_{\alpha\beta}$ are flat, for example when $(X, \Delta)$ is an orbifold.

\begin{defi}
    Let $(X, \Delta)$ be a pair endowed with an orbistructure $\mathcal{C}$, whose l.u.c. are denoted $(U_\alpha, V_\alpha, \eta_\alpha)_\alpha$. We use notation of Definition~\ref{deforbistructure}. \emph{An orbisheaf $\mathcal{F}$} is a collection $(\mathcal{F}_\alpha)_\alpha$ of sheaves such that:
    \begin{itemize}
    \item for each $\alpha$, $\mathcal{F}_\alpha$ is a coherent $\mathcal{O}_{V_\alpha}$-module on $V_\alpha$,
    \item for each $\alpha, \beta$ there is an isomorphism, called \emph{transition function}, of $\mathcal{O}_{V_{\alpha\beta}}$-modules $\varphi_{\alpha\beta} : g_{\beta\alpha}^{[*]} \mathcal{F}_\beta \longrightarrow g_{\alpha\beta}^{[*]} \mathcal{F}_\alpha $ satisfying compatibility conditions on triple intersections (here, the notation $g_{\alpha\beta}^{[*]} \mathcal{F}_\alpha$ is used to denote the reflexive hull of the pullback $\left( g_{\alpha\beta}^{*} \mathcal{F}_\alpha \right)^{\vee\vee}$).
    \end{itemize}
    We say that $\mathcal{F}$ is an \emph{orbibundle of rank $r$} (resp. \emph{torsion free, reflexive}) if each $\mathcal{F}_\alpha$ is locally free of rank $r$ (resp. torsion free, reflexive).
    If $\mathcal{F}$ and $\mathcal{G}$ are two orbisheaves, a \emph{morphism of orbisheaves} $u : \mathcal{F} \longrightarrow \mathcal{G}$ is a collection of morphisms $(u_\alpha)_\alpha$ of morphisms of $\mathcal{O}_{V_\alpha}$-modules $u_\alpha : \mathcal{F}_\alpha \longrightarrow \mathcal{G}_\alpha$ compatible with transition morphisms. See \cite[Def.~2.21]{GT22} for details.

    A morphism of orbisheaves $u : \mathcal{F} \longrightarrow \mathcal{G}$ is \emph{injective} (resp. \emph{surjective}) if each $u_\alpha$ is \emph{injective} (resp. \emph{surjective}).
\end{defi}

\begin{rema}
    The compatibility condition with $\alpha = \beta$ ensures that $\mathcal{F}_\alpha$ is a $G_\alpha$-sheaf, where $G_\alpha := \mathrm{Gal}\; (\eta_\alpha)$. When the index $\alpha$ is not precised, we will prefer notation $G_\eta$ to denote the Galois group of the morphism of a l.u.c. $(U, V, \eta)$, and $\mathcal{F}_\eta$ to denote the local sheaf defining $\mathcal{F}$ in this chart.
\end{rema}

\begin{defi}
    Let $(X, \Delta), (Y, \Delta')$ be two pairs endowed with orbistructures $\mathcal{C} = \lbrace (U_\alpha, V_\alpha, \eta_\alpha) \rbrace_\alpha$ and $\mathcal{D} = \lbrace (T_{\alpha'}, W_{\alpha'}, \mu_{\alpha'}) \rbrace_\alpha$ respectively. Let $\mathcal{F}$ be an orbisheaf on $(X, \Delta)$ and let $\pi \colon (Y, \Delta') \longrightarrow (X, \Delta)$ be an orbifold morphism. 
    
    \emph{The pullback of $\mathcal{F}$ by $\pi$}, denoted $\pi^* \mathcal{F}$, is the orbisheaf defined by the collection $(\pi_{\alpha\alpha'}^* \mathcal{F}_\alpha)$. If $ (\varphi_{\alpha\beta})$ are the transition maps of $\mathcal{F}$, the transition maps of $\pi^* \mathcal{F}$ are $(\pi_{\alpha\beta, \alpha'\beta'}^{[*]} \varphi_{\alpha\beta})$ where $\pi_{\alpha\beta, \alpha'\beta'}^*$ has been defined in Remark~\ref{remarkoforbifoldmorphism}, and they satisfy cocycle conditions on triple intersections. We will denote by $\pi^{[*]} \mathcal{F} := (\pi^* \mathcal{F})^{\vee\vee}$.
\end{defi}

\begin{exem}
     When $(X, \Delta)$ is an orbifold, the \textit{orbifold cotangent sheaf} $\Omega^1_{(X, \Delta)}$ is given by the collection $(\Omega^1_{V_\alpha})_\alpha$, and the transition functions are given by the composition of the following isomorphisms (because in this case the maps $g_{\alpha\beta}$ are \'etale):
     $$
     p_{\alpha\beta} \colon g_{\beta\alpha}^* \Omega^1_{V_\beta} \overset{\sim}{\longrightarrow} \Omega^1_{V_{\alpha\beta}} \overset{\sim}{\longrightarrow} g_{\alpha\beta}^* \Omega^1_{V_\alpha}.
     $$
     Its dual is the \textit{orbifold tangent sheaf} $\mathcal{T}_{(X, \Delta)}$.
\end{exem}

\begin{exem}
    Let $(X, \Delta)$ be a pair endowed with an orbistructure, and let $\mathcal{F}$ be a coherent sheaf on $X$. The collection $(\eta_\alpha^* \mathcal{F}_{| U_\alpha})_\alpha$ defines an orbisheaf with natural transition maps $\mathrm{nat}_{\alpha\beta} \colon g_{\beta\alpha}^{[*]} \eta_\beta^* \mathcal{F} \overset{\sim}{\longrightarrow} g_{\alpha\beta}^{[*]} \eta_\alpha^* \mathcal{F}$ coming from $\eta_\alpha \circ g_{\alpha\beta} = \eta_\beta \circ g_{\beta\alpha}$. We denote by $\mathcal{F}^\orb$ this orbisheaf. We will denote $\mathcal{F}^{[\orb]} = {\mathcal{F}^\orb}^{\vee\vee}$.
\end{exem}

\subsubsection{Canonical extension of an orbifold}

In order to define the canonical extension associated to an orbifold $(X, \Delta)$, we need to give a correspondence between classes of extensions of orbisheaves and first extension group.

\begin{prop}\label{extensionorbisheaves}
    Let $\mathcal{S}$, $\mathcal{Q}$ be two orbisheaves on an orbifold $(X, \Delta)$, with transition morphisms denoted by $\varphi_{\alpha\beta}$ and $\psi_{\alpha\beta}$ respectively.
    
    Let $\Omega := \left( \omega_{\alpha\beta} \colon g_{\beta \alpha}^* \mathcal{Q}_\beta \longrightarrow g_{\alpha \beta}^* \mathcal{S}_\alpha \right)_{\alpha,\beta}$ be a collection of $\mathcal{O}_{V_{\alpha\beta}}$-modules morphisms, and $h_{\alpha\beta,\gamma} \colon V_{\alpha\beta\gamma} \longrightarrow V_{\alpha\beta}$ be the natural morphisms induced by triple intersections. If $\Omega$ satisfies the following cocycle conditions:
    $$
    h_{\alpha\beta,\gamma}^* \varphi_{\alpha\beta} \circ h_{\beta\gamma, \alpha }^* \omega_{\beta\gamma} + h_{\alpha\beta,\gamma}^* \omega_{\alpha\beta} \circ h_{\beta\gamma, \alpha }^* \psi_{\beta\gamma} = h_{\gamma\alpha, \beta}^* \omega_{\alpha\gamma},
    $$
    then, it induces an extension $\mathcal{F}$ of $\mathcal{Q}$ by $\mathcal{S}$, it means a short exact sequence of orbisheaves:
    $$
    0 \longrightarrow \mathcal{S} \longrightarrow \mathcal{F} \longrightarrow \mathcal{Q} \longrightarrow 0.
    $$
\end{prop}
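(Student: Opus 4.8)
The plan is to build $\mathcal{F}$ chart by chart as the direct sum of $\mathcal{S}$ and $\mathcal{Q}$, encoding the extension datum $\Omega$ into the off-diagonal part of the transition functions. Concretely, on each $V_\alpha$ I set $\mathcal{F}_\alpha := \mathcal{S}_\alpha \oplus \mathcal{Q}_\alpha$, and on each double intersection $V_{\alpha\beta}$ I define the transition morphism $\Phi_{\alpha\beta} \colon g_{\beta\alpha}^* \mathcal{F}_\beta \longrightarrow g_{\alpha\beta}^* \mathcal{F}_\alpha$ by the upper-triangular block matrix
$$
\Phi_{\alpha\beta} = \begin{pmatrix} \varphi_{\alpha\beta} & \omega_{\alpha\beta} \\ 0 & \psi_{\alpha\beta} \end{pmatrix},
$$
using that pullback commutes with direct sums to identify $g_{\beta\alpha}^* \mathcal{F}_\beta \simeq g_{\beta\alpha}^* \mathcal{S}_\beta \oplus g_{\beta\alpha}^* \mathcal{Q}_\beta$, and similarly on the target. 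Because $(X, \Delta)$ is an orbifold, the maps $g_{\alpha\beta}$ are \'etale, so these pullbacks are exact and there is no discrepancy between $g_{\alpha\beta}^*$ and the reflexive pullback $g_{\alpha\beta}^{[*]}$; in particular each $\Phi_{\alpha\beta}$ is an isomorphism, since its diagonal blocks $\varphi_{\alpha\beta}$ and $\psi_{\alpha\beta}$ are.

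The heart of the argument is the verification of the cocycle condition for $(\Phi_{\alpha\beta})$ on each triple intersection $V_{\alpha\beta\gamma}$, namely
$$
h_{\alpha\beta,\gamma}^* \Phi_{\alpha\beta} \circ h_{\beta\gamma,\alpha}^* \Phi_{\beta\gamma} = h_{\gamma\alpha,\beta}^* \Phi_{\alpha\gamma}.
$$
Expanding the block product, the two diagonal entries read $\varphi_{\alpha\beta} \circ \varphi_{\beta\gamma} = \varphi_{\alpha\gamma}$ and $\psi_{\alpha\beta} \circ \psi_{\beta\gamma} = \psi_{\alpha\gamma}$ after the appropriate pullbacks, and these hold precisely because $\mathcal{S}$ and $\mathcal{Q}$ are orbisheaves. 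The single off-diagonal entry reads
$$
h_{\alpha\beta,\gamma}^* \varphi_{\alpha\beta} \circ h_{\beta\gamma,\alpha}^* \omega_{\beta\gamma} + h_{\alpha\beta,\gamma}^* \omega_{\alpha\beta} \circ h_{\beta\gamma,\alpha}^* \psi_{\beta\gamma} = h_{\gamma\alpha,\beta}^* \omega_{\alpha\gamma},
$$
which is exactly the cocycle hypothesis imposed on $\Omega$. Hence $(\mathcal{F}_\alpha, \Phi_{\alpha\beta})$ is a genuine orbisheaf on $(X, \Delta)$, reflexive (resp.\ an orbibundle) whenever $\mathcal{S}$ and $\mathcal{Q}$ are.

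It remains to exhibit the maps of the short exact sequence. On each chart the canonical inclusion $\iota_\alpha \colon \mathcal{S}_\alpha \hookrightarrow \mathcal{S}_\alpha \oplus \mathcal{Q}_\alpha$ and projection $p_\alpha \colon \mathcal{S}_\alpha \oplus \mathcal{Q}_\alpha \twoheadrightarrow \mathcal{Q}_\alpha$ assemble into morphisms of orbisheaves $\iota \colon \mathcal{S} \longrightarrow \mathcal{F}$ and $p \colon \mathcal{F} \longrightarrow \mathcal{Q}$. Compatibility with the transition functions is immediate from the triangular shape of $\Phi_{\alpha\beta}$: the term $\omega_{\alpha\beta}$ acts only on the $\mathcal{Q}$-factor, so it is killed by $\iota_\alpha$ on one side and invisible to $p_\alpha$ on the other, whence $g_{\alpha\beta}^* \iota_\alpha \circ \varphi_{\alpha\beta} = \Phi_{\alpha\beta} \circ g_{\beta\alpha}^* \iota_\beta$ and $g_{\alpha\beta}^* p_\alpha \circ \Phi_{\alpha\beta} = \psi_{\alpha\beta} \circ g_{\beta\alpha}^* p_\beta$. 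Since each chart-wise sequence $0 \to \mathcal{S}_\alpha \to \mathcal{F}_\alpha \to \mathcal{Q}_\alpha \to 0$ is the split short exact sequence of a direct sum, $\iota$ is injective and $p$ is surjective in the sense of the definition of morphisms of orbisheaves, which yields the desired extension.

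I expect the only delicate point to be the bookkeeping of the triple-intersection pullback maps $h_{\alpha\beta,\gamma}$: one must confirm that pullback along these commutes both with the formation of $\mathcal{F}_\alpha$ as a direct sum and with block-matrix composition, so that the matrix cocycle identity genuinely splits into the three scalar identities above. Once the \'etaleness of the $g_{\alpha\beta}$ is invoked to guarantee exactness of all the pullbacks involved, this reduces to a formal diagram chase, and no further geometric input is needed.
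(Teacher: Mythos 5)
Your proposal is correct and follows essentially the same route as the paper: define $\mathcal{F}_\alpha := \mathcal{S}_\alpha \oplus \mathcal{Q}_\alpha$ with upper-triangular block transition morphisms whose off-diagonal entry is $\omega_{\alpha\beta}$, check that the cocycle identity for these matrices reduces to the cocycle conditions on $\varphi$, $\psi$, and $\Omega$, and observe that the chart-wise inclusions and projections are compatible with the transitions. The additional remarks on \'etaleness of the $g_{\alpha\beta}$ and exactness of pullbacks are consistent with the paper's setting and do not change the argument.
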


\begin{proof}
    Let $\theta_{\alpha\beta} \colon g_{\beta\alpha}^* (\mathcal{S}_{\beta} \oplus \mathcal{Q}_{\beta} ) \longrightarrow g_{\alpha\beta}^*(\mathcal{S}_{\alpha} \oplus \mathcal{Q}_{\alpha} )$  be the $\mathcal{O}_{V_{\alpha\beta}}$-modules morphism defined by:
    $$\theta_{\alpha\beta} :=
    \left[
    \begin{array}{c|ccc}
       \varphi_{\alpha\beta}  & & \omega_{\alpha\beta} & \\
       \hline
       &&& \\
       0 && \psi_{\alpha\beta} & \\
       \\         
    \end{array}
    \right]
    $$
    The cocycle conditions satisfied by $(\varphi_{\alpha\beta})_{\alpha\beta}$, $(\psi_{\alpha\beta})_{\alpha\beta}$ $(\omega_{\alpha\beta})_{\alpha\beta}$ induce those on $\theta_{\alpha\beta}$. Hence, they determine an orbisheaf $\mathcal{F}$, given by the collection $(\mathcal{S}_{\alpha} \oplus \mathcal{Q}_{\alpha})_\alpha$ and the transition morphisms $(\theta_{\alpha\beta})_{\alpha\beta}$. Moreover, the injections $\mathcal{S}_\alpha \hookrightarrow \mathcal{S}_\alpha \oplus \mathcal{Q}_\alpha$ and the projections $\mathcal{S}_\alpha \oplus \mathcal{Q}_\alpha \longrightarrow \mathcal{Q}_\alpha$ are compatible with transition morphisms.
\end{proof}

\begin{exem}\label{deforbifoldextension}
    Let $\mathcal{Q} = \mathcal{T}_{(X,\Delta)}$ and $\mathcal{S} = \mathcal{O}_X$.\\
    We assume that the orbifold structure is such that the $V_\alpha$ are trivializing open sets for the rank 1 orbibundle $\Omega^n_{(X, \Delta)}$. Let $\varphi_{\alpha\beta}$ be the transition morphisms of this orbibundle. These are automorphisms of $\mathcal{O}_{V_{\alpha\beta}}$. The image of $1$ gives a section $\Phi_{\alpha\beta} \in \mathcal{O}_{V_{\alpha\beta}}(V_{\alpha\beta})^\times$. We have the cocycle condition: $h_{\alpha\beta, \gamma}^* \Phi_{\alpha\beta} h_{\beta\gamma, \alpha}^* \Phi_{\beta\gamma} = h_{\gamma\alpha, \beta}^* \Phi_{\alpha\gamma}$. By taking the derivative of its logarithm, we get: 
    $$
    h_{\alpha\beta, \gamma}^* \left( \frac{ d\Phi_{\alpha\beta}}{\Phi_{\alpha\beta}} \right) + h_{\beta\gamma, \alpha}^* \left( \frac{d\Phi_{\beta\gamma}}{\Phi_{\beta\gamma}} \right) = h_{\gamma\alpha, \beta}^* \left( \frac{d\Phi_{\alpha\gamma}}{\Phi_{\alpha\gamma}} \right).
    $$
    We obtain a collection of $1$-forms $\Omega := \left( \frac{\mathrm{d}\Phi_{\alpha\beta}}{\Phi_{\alpha\beta}} \right)$ on $V_{\alpha\beta}$. With the identification $\Omega^1_{V_{\alpha\beta}} = \mathcal{T}_{V_{\alpha\beta}}^\vee \simeq g_{\alpha\beta}^*\mathcal{T}_{V_{\alpha}}^\vee$, we denote $\omega_{\alpha\beta} := \frac{\mathrm{d}\Phi_{\alpha\beta}}{\Phi_{\alpha\beta}}$, so that the collection $\Omega$ consists of morphisms $\omega_{\alpha\beta} \colon g_{\alpha\beta}^* \mathcal{T}_{V_\alpha} \longrightarrow \mathcal{O}_{V_{\alpha\beta}}$ satisfying the closed \v{C}ech cocycle relation of Proposition~\ref{extensionorbisheaves}. A consequence of this proposition is that it provides us with an extension $\mathcal{E}_{(X, \Delta)}$ of $\mathcal{T}_{(X, \Delta)}$ by $\mathcal{O}_X$, called the \textit{canonical extension of $(X, \Delta)$}. When $\Delta = 0$ and $X$ is smooth, this construction produces the classical canonical extension.\\
\end{exem}

\subsection{Flatness and developability}

\subsubsection{Orbi-locally freeness and sheaf of invariants}

In order to investigate to what extent an orbibundle is obtained as a pullback of a bundle on $X$, we have the following proposition.

\begin{prop}\label{orbisheafgeneratedinvariant}
    Let $(X, \Delta)$ be a pair endowed with an orbistructure, and let $\mathcal{F}$ be an orbibundle of rank $r$ defined on $(X, \Delta)$. Let $x$ be a point of $X$. The following assertions are equivalent:\\
    $(i)$ There exists a neighbourhood $U_0$ around $x$ and an isomorphism of orbisheaves $\mathcal{F}_{| U_0} \simeq \mathcal{O}_{U_0}^{\oplus r}$.\\
    $(ii)$ There exists a neighbourhood $U_0$ of $x$ such that for all l.u.c. $(U,V,\eta)$ around $x$, ${\mathcal{F}_{\eta}}_{|\eta^{-1}(U_0)}$ is a free $\mathcal{O}_{\eta^{-1}(U_0)}$-module of rank $r$, and there exists a basis composed by $G_\eta$-invariant sections.\\
    $(iii)$ There exists a neighbourhood $U_0$ and a l.u.c. $(U,V,\eta)$ around $x$ such that ${\mathcal{F}_{\eta}}_{|\eta^{-1}(U_0)}$ is a free $\mathcal{O}_{\eta^{-1}(U_0)}$-module of rank $r$, and there exists a basis composed by $G_\eta$-invariant sections.
\end{prop}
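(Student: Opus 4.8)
The plan is to prove the cycle of implications $(i) \Rightarrow (ii) \Rightarrow (iii) \Rightarrow (i)$, the only substantial steps being $(i) \Rightarrow (ii)$ and $(iii) \Rightarrow (i)$. Throughout I identify the trivial orbibundle $\mathcal{O}_{U_0}^{\oplus r}$ with the orbisheaf $(\mathcal{O}_X^{\oplus r})^\orb$ of the preceding example restricted to $U_0$: in a l.u.c. $(U, V, \eta)$ it is $\eta^* \mathcal{O}_X^{\oplus r} = \mathcal{O}_{V}^{\oplus r}$, and its standard basis $e_1, \dots, e_r$ consists of the pullbacks of the section $1 \in \mathcal{O}_X$, hence of $G_\eta$-invariant sections for the natural linearization. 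For $(i) \Rightarrow (ii)$, suppose $u \colon \mathcal{F}_{|U_0} \xrightarrow{\sim} \mathcal{O}_{U_0}^{\oplus r}$ is an isomorphism of orbisheaves, given by isomorphisms $u_\eta \colon \mathcal{F}_\eta \to \mathcal{O}_V^{\oplus r}$ compatible with the transition maps. The compatibility condition for $\alpha = \beta$ records exactly the $G_\eta$-linearizations, so each $u_\eta$ is $G_\eta$-equivariant; consequently $(u_\eta^{-1}(e_i))_{1 \le i \le r}$ is a basis of $\mathcal{F}_\eta$ consisting of $G_\eta$-invariant sections. As this works for every chart around $x$, assertion $(ii)$ follows.

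The implication $(ii) \Rightarrow (iii)$ is immediate by choosing a single chart. For $(iii) \Rightarrow (i)$, fix a chart $(U, V, \eta)$ around $x$ with a $G_\eta$-invariant basis $s_1, \dots, s_r$ of $\mathcal{F}_\eta$ over $\eta^{-1}(U_0)$; shrinking $U_0$ I may assume $U_0 \subseteq U$ and $\eta^{-1}(U_0) = V$. Sending $s_i \mapsto e_i$ produces a $G_\eta$-equivariant isomorphism $\mathcal{F}_\eta \simeq \mathcal{O}_V^{\oplus r}$, where the target carries the natural (geometric) linearization with trivial action on the factors. Taking $G_\eta$-invariants of the pushforward and using $(\eta_* \mathcal{O}_V)^{G_\eta} = \mathcal{O}_{U_0}$ (as $U_0 = V / G_\eta$), the invariant sections descend to a genuine coherent sheaf $\mathcal{G} := (\eta_* \mathcal{F}_\eta)^{G_\eta}$ on $U_0$, which the basis identifies with $\mathcal{O}_{U_0}^{\oplus r}$; moreover the evaluation map gives a $G_\eta$-equivariant identification $\eta^* \mathcal{G} \simeq \mathcal{F}_\eta$. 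Thus in the chart $\eta$ I have realized $\mathcal{F}_\eta$ as the pullback of the honest free sheaf $\mathcal{G}$ on $U_0$.

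It remains to propagate this trivialization to every chart so as to exhibit an isomorphism of orbisheaves $\mathcal{F}_{|U_0} \simeq \mathcal{G}^\orb \simeq \mathcal{O}_{U_0}^{\oplus r}$, and this is the step I expect to be the main obstacle. For another chart $\mu = \eta_\beta$ I would work on the fiber product $V_{\alpha\beta}$ (with $\alpha$ the index of $\eta$): composing the transition morphism $\varphi_{\alpha\beta}$ with the identification $\mathcal{F}_\eta \simeq \eta^* \mathcal{G}$ and using $\eta \circ g_{\alpha\beta} = \mu \circ g_{\beta\alpha}$ together with the fact that $\mathcal{G}$ is a genuine sheaf on $X$ yields an isomorphism $g_{\beta\alpha}^{[*]} \mathcal{F}_\beta \simeq g_{\beta\alpha}^{[*]} \mu^* \mathcal{G}$ over $V_{\alpha\beta}$. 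The crux is to descend this isomorphism along the quasi-\'etale morphism $g_{\beta\alpha} \colon V_{\alpha\beta} \to V_\beta$ to an isomorphism $\mathcal{F}_\beta \simeq \mu^* \mathcal{G}$ on $V_\beta$: since all sheaves involved are reflexive and $g_{\beta\alpha}$ is \'etale away from a set of codimension at least two, it suffices to check sheet-independence on the normalized double fiber product $V_{\alpha\beta} \times_{V_\beta} V_{\alpha\beta}$, which follows from the cocycle condition for the transition maps of $\mathcal{F}$ and the canonical descent datum carried by the pullbacks of $\mathcal{G}$. The resulting chartwise isomorphisms $\mathcal{F}_\beta \simeq \mu^* \mathcal{G} = (\mathcal{G}^\orb)_\beta$ are compatible with the transition maps by construction, and assemble into the desired orbisheaf isomorphism, completing $(iii) \Rightarrow (i)$.
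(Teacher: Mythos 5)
The paper states Proposition~\ref{orbisheafgeneratedinvariant} without proof, so there is no in-text argument to compare yours against; judged on its own, your proposal is correct and is the argument one would expect. The implications $(i)\Rightarrow(ii)$ and $(ii)\Rightarrow(iii)$ are handled exactly right (the $\alpha=\beta$ compatibility is precisely the $G_\eta$-equivariance of $u_\eta$, and the standard basis of $\eta^*\mathcal{O}_X^{\oplus r}$ is invariant). For $(iii)\Rightarrow(i)$ you correctly identify the descent of the frame from $V_{\alpha\beta}$ to $V_\beta$ as the crux, and your sketch is the right one; to make it airtight you should spell out three points. First, the normalized double fibre product $V_{\alpha\beta}\times_{V_\beta}V_{\alpha\beta}$ is the triple overlap $V_{\alpha\beta\alpha}$, and sheet-independence of $\varphi_{\alpha\beta}^{-1}(g_{\alpha\beta}^*s_i)$ is exactly the cocycle condition there combined with the $G_\alpha$-invariance of the $s_i$ (the two projections to $V_\alpha$ differ locally by elements of $G_\alpha$, and $\varphi_{\alpha\alpha}$ encodes that action); étale descent then applies over the big open subset of $V_\beta$ where $g_{\beta\alpha}$ is étale, and the section extends by reflexivity. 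Second, the descended sections $t_i$ form a frame of $\mathcal{F}_\beta$ everywhere, not merely away from codimension two: the degeneracy locus of $r$ sections of a rank-$r$ locally free sheaf is a divisor (vanishing of a determinant), hence empty if contained in codimension two. Third, the asserted compatibility \enquote{by construction} with all transition maps, including the case $\beta=\gamma$ which gives $G_\beta$-invariance of the new frame, follows by pulling both sides back to $V_{\alpha\beta\gamma}$ (which dominates $V_{\beta\gamma}$ over $U_0$ since $U_0\subseteq U_\alpha$) and applying the cocycle condition, using that pullback of sections of a locally free sheaf along a finite surjective map of normal spaces is injective. With these details supplied the proof is complete.
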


\begin{defi}
    With notation as above, we say that \emph{$\mathcal{F}$ is orbi-locally free around $x$} if one of the assertions of Proposition~\ref{orbisheafgeneratedinvariant} is satisfied. If $\mathcal{F}$ is orbi-locally free around each point, we say that \emph{$\mathcal{F}$ is orbi-locally free}.
\end{defi}

\begin{defi}\label{sheafofinvariants}
    Let $(X, \Delta)$ be a pair endowed with an orbistructure, and $\mathcal{F}$ a reflexive orbisheaf. We can define a sheaf on $X$ in the following way. For each l.u.c. $(U_\alpha, V_\alpha, \eta_\alpha)$, there is a sheaf $\mathcal{F}_\alpha^{\inv}$ on $U_\alpha$ defined by:
    $$
    \mathcal{F}_\alpha^{\inv} \colon U \longmapsto H^0( \eta_\alpha^{-1}(U), \mathcal{F}_\alpha)^{G_\alpha},
    $$
    that associates to each open subset $U$ of $U_\alpha$ the set of $G_\alpha$-invariant sections of $\mathcal{F}_\alpha$ defined on $\eta_\alpha^{-1}(U)$. This sheaf is reflexive by \cite[Lemma.~A.4]{GKKP11}.

    There are morphisms $\theta_{\alpha\beta} \colon {\mathcal{F}_\beta^{\inv}}_{| U_\alpha \cap U_\beta} \longrightarrow {\mathcal{F}_\alpha^{\inv}}_{| U_\alpha \cap U_\beta}$ induced by transition maps of the orbisheaf $\mathcal{F}$ (here we use reflexivity of $\mathcal{F}_\alpha$ to extend a section of $\mathcal{F}_\alpha$ obtained by descent of a $G_\alpha$-invariant section on $g_{\alpha\beta}^{[*]} \mathcal{F}_\alpha$ away from the non-\'etale locus of $g_{\alpha\beta}$). They satisfy cocycle conditions, and it follows that the sheaves $\mathcal{F}_\alpha^{\inv}$ glue together to define a reflexive sheaf $\mathcal{F}^{\inv}$ on $X$.\\

    Moreover, if $u \colon \mathcal{F} \longrightarrow \mathcal{G}$ is a morphism of reflexive orbisheaves, we can construct a morphism $u^{\inv} \colon \mathcal{F}^\inv \longrightarrow \mathcal{G}^\inv $ as following. For each $\alpha$, there is a morphism $u_\alpha^{\inv} \colon \mathcal{F}_\alpha^{\inv} \longrightarrow \mathcal{G}_\alpha^{\inv}$ induced by $u_\alpha$. Indeed, the morphism $u_\alpha$ sends $G_\alpha$-invariant sections of $\mathcal{F}_\alpha$ on invariant sections of $\mathcal{G}_\alpha$ because it is $G_\alpha$-equivariant. These morphisms are compatible with cocycles of $\mathcal{F}^\mathrm{inv}$ and $\mathcal{G}^{\inv}$, because the morphisms $u_\alpha$ are compatible with transition morphisms of the orbisheaves $\mathcal{F}$ and $\mathcal{G}$. They glue together to compose a global morphism $u^{\inv} \colon \mathcal{F}^{\inv} \longrightarrow \mathcal{G}^{\inv}$. This construction is compatible with composition of orbisheaves and preserves identity morphisms. Hence, it is a functor from the category of orbisheaves to the category of sheaves.
\end{defi}

\begin{prop}\label{factsinvariantsheaves}
    Let $(X, \Delta)$ be a pair endowed with an orbistructure. We have the following statements.\\
\noindent
\underline{A.} If $\mathcal{A}$ is a reflexive sheaf on $X$, then we have a natural isomorphism:  
$$ \mathcal{A} \simeq (\mathcal{A}^{[\orb]})^{\inv}.
$$

\noindent
\underline{B.} If $\Delta = 0$ and $\mathcal{F}$ is a reflexive orbisheaf, then:
$$
\mathcal{F} \simeq \left( \mathcal{F}^\inv \right)^{[\orb]}.
$$

\noindent
\underline{C.} The orbibundle $\mathcal{F}$ is orbi-locally free iff $\mathcal{F}^{\inv}$ is locally free and there is an isomorphism of orbisheaves: $\mathcal{F} \simeq (\mathcal{F}^\inv)^\orb$.

\end{prop}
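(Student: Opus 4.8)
The three identities are local on $X$ and compatible with the gluing data, so for each I would fix a single local uniformizing chart $(U_\alpha, V_\alpha, \eta_\alpha)$ with Galois group $G_\alpha$, produce a canonical morphism between the two reflexive sheaves (resp. reflexive $G_\alpha$-sheaves) at stake, check that it is an isomorphism over a big open subset of $U_\alpha$ (resp. $V_\alpha$) lying over the étale locus of $\eta_\alpha$, and then extend it over the whole chart by reflexivity before gluing. The only inputs beyond this mechanism are the behaviour of pullback and invariants along an \emph{étale} Galois cover (projection formula together with $(\eta_{\alpha*}\mathcal{O})^{G_\alpha} = \mathcal{O}$, and étale descent of $G_\alpha$-sheaves), and \cite[Lemma~A.4]{GKKP11}, which guarantees that $\mathcal{F}^{\inv}$ is reflexive so that such extensions are legitimate; the gluing step is then formal, following the same cocycle bookkeeping as in Definition~\ref{sheafofinvariants}.

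For \underline{A}, pulling back a section of $\mathcal{A}$ over $U_\alpha$ gives a $G_\alpha$-invariant section of $\eta_\alpha^*\mathcal{A}$, hence of $\mathcal{A}^{[\orb]}_\alpha = (\eta_\alpha^*\mathcal{A})^{\vee\vee}$; this defines a natural morphism $\mathcal{A} \to (\mathcal{A}^{[\orb]})^{\inv}$. Over the big open locus $U_\alpha^\circ$ where $\eta_\alpha$ is étale and $\mathcal{A}$ is locally free, $\eta_\alpha^*\mathcal{A}$ is already reflexive and the projection formula yields $(\eta_{\alpha*}\eta_\alpha^*\mathcal{A})^{G_\alpha} \simeq \mathcal{A}\otimes(\eta_{\alpha*}\mathcal{O})^{G_\alpha} \simeq \mathcal{A}$, so the natural morphism is an isomorphism there. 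As both sides are reflexive on the normal space $U_\alpha$, it extends to an isomorphism over $U_\alpha$; these are compatible with the transition maps $\mathrm{nat}_{\alpha\beta}$ of $\mathcal{A}^{\orb}$, so they glue to the global isomorphism.

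For \underline{B}, the hypothesis $\Delta = 0$ is what forces $\eta_\alpha$ to be étale in codimension one, so its étale locus $V_\alpha^\circ$ is a big open subset of $V_\alpha$. Over $V_\alpha^\circ$, étale descent for the $G_\alpha$-cover is an equivalence of categories identifying $G_\alpha$-sheaves upstairs with sheaves downstairs, under which $\mathcal{F}_\alpha$ corresponds to its invariants $\mathcal{F}^{\inv}_\alpha$; this gives a canonical $G_\alpha$-equivariant isomorphism $\eta_\alpha^*(\mathcal{F}^{\inv}_\alpha)|_{V_\alpha^\circ} \simeq \mathcal{F}_\alpha|_{V_\alpha^\circ}$. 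Taking reflexive hulls, $(\mathcal{F}^{\inv})^{[\orb]}_\alpha = (\eta_\alpha^*\mathcal{F}^{\inv}_\alpha)^{\vee\vee}$ and $\mathcal{F}_\alpha$ are reflexive, so the isomorphism extends to $V_\alpha$ and the extensions glue as in A. This step is the main obstacle, and $\Delta = 0$ is essential: if $\eta_\alpha$ ramifies along a boundary component, passing to invariants forgets the character by which $G_\alpha$ acts on $\mathcal{F}_\alpha$ along the ramification divisor, and $(\mathcal{F}^{\inv})^{[\orb]}$ only recovers the linearisation induced by $\eta_\alpha^*$, so the conclusion genuinely fails in that case.

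For \underline{C}, I would argue through the criterion of Proposition~\ref{orbisheafgeneratedinvariant}. If $\mathcal{F}$ is orbi-locally free, then locally each $\mathcal{F}_\alpha$ has an $\mathcal{O}_{V_\alpha}$-basis of $G_\alpha$-invariant sections; since the invariant functions are exactly those pulled back from $U_\alpha$, these sections descend to a free $\mathcal{O}_{U_\alpha}$-frame of $\mathcal{F}^{\inv}$, proving $\mathcal{F}^{\inv}$ locally free, and pulling the frame back identifies $(\mathcal{F}^{\inv})^{\orb}_\alpha = \eta_\alpha^*(\mathcal{O}_{U_\alpha}^{\oplus r})$ with $\mathcal{F}_\alpha$ compatibly with transitions, giving $\mathcal{F} \simeq (\mathcal{F}^{\inv})^{\orb}$. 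Conversely, if $\mathcal{F}^{\inv}$ is locally free of rank $r$ and $\mathcal{F} \simeq (\mathcal{F}^{\inv})^{\orb}$, then a local frame of $\mathcal{F}^{\inv}$ pulls back under $\eta_\alpha^*$ to an $\mathcal{O}_{V_\alpha}$-basis of $\mathcal{F}_\alpha$ whose members, being pullbacks from the base, are $G_\alpha$-invariant; thus condition $(iii)$ of Proposition~\ref{orbisheafgeneratedinvariant} holds and $\mathcal{F}$ is orbi-locally free.
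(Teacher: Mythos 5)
Your treatment of B and C follows essentially the same route as the paper (étale descent over a big open set plus reflexive extension for B; descent of invariant frames and the criterion of Proposition~\ref{orbisheafgeneratedinvariant} for C), and your remark on why $\Delta=0$ is indispensable in B is exactly the right counterexample to have in mind. The problem is in A. Statement A carries no hypothesis on $\Delta$, and for a chart $(U_\alpha,V_\alpha,\eta_\alpha)$ meeting $|\Delta|$ the morphism $\eta_\alpha$ is \emph{adapted}, hence ramifies along $\eta_\alpha^{-1}(|\Delta|)$ with multiplicity $m_ia_i\geq 2$: the locus where $\eta_\alpha$ is étale has complement of codimension one, so the set you call ``the big open locus $U_\alpha^\circ$ where $\eta_\alpha$ is étale and $\mathcal{A}$ is locally free'' is not big when $\Delta\neq 0$. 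Two reflexive sheaves agreeing only outside a divisor need not agree everywhere, so the reflexive-extension step that closes your argument for A is unjustified in precisely the case the statement is meant to cover.

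The repair is to establish the isomorphism over the ramification divisor as well, and this is what the paper does: it works over the big locus where $\mathcal{A}$ is locally free (no condition on $\eta_\alpha$), reduces by a trivialization to $\mathcal{A}=\mathcal{O}_X$, and uses that $G_\alpha$-invariant functions on $V_\alpha$ are exactly pullbacks of functions on $U_\alpha=V_\alpha/G_\alpha$ --- a statement about the quotient, true across the branch locus. In fact your own projection-formula computation survives this fix: for $\eta_\alpha$ finite and $\mathcal{A}$ locally free one has $\eta_{\alpha*}\eta_\alpha^*\mathcal{A}\simeq\mathcal{A}\otimes\eta_{\alpha*}\mathcal{O}_{V_\alpha}$ and $(\eta_{\alpha*}\mathcal{O}_{V_\alpha})^{G_\alpha}=\mathcal{O}_{U_\alpha}$ without any étaleness, and taking $G_\alpha$-invariants commutes with tensoring by a locally free sheaf carrying the trivial action. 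So the argument becomes correct once you drop the restriction to the étale locus; as written, however, the claim that this locus is big is false for $\Delta\neq 0$ and the proof of A has a genuine gap there.
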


\begin{proof}
    For the point A, let $U \hookrightarrow U_\alpha$ be an open subset and $(U_\alpha, V_\alpha, \eta_\alpha)$ is a l.u.c., the isomorphism is locally given by:
    $$
    s \in \mathcal{A}_{| U_\alpha}(U) \longmapsto s \circ \eta_\alpha \in H^0(\eta_\alpha^{-1}(U), \eta_\alpha^*\mathcal{A})^{G_\alpha}.
    $$
    First, we show that it is an isomorphism when $\mathcal{A}$ is locally free.
   
    It is injective as $\eta_\alpha$ is onto $U_\alpha$. Let us show that this morphism is surjective. 
    It is true if $\mathcal{A} = \mathcal{O}_X$ by definition of functions on a quotient space. Now, if $\mathcal{A}_{| U}$ is isomorphic to $\mathcal{O}_{U}^{\oplus r}$ via a trivialization $s$, the precomposition $\widetilde{s} := s \circ \eta_\alpha$ provides a trivialization for $\eta_\alpha^* \mathcal{A}_{| U}$ by invariant sections. It follows by the case of the trivial bundle that a section $\displaystyle t := \sum f_i \widetilde{s_i}$ which is $G_\alpha$-invariant yields functions $g_1, \ldots, g_r$ in $\mathcal{O}_U$ such that $g_i = f_i \circ \eta_\alpha$. Hence, the section $t$ is obtained by the precomposition by $\eta_\alpha$ of the section $\displaystyle \sum g_i s_i$. It implies that the morphism is onto.

    Now, we assume that $\mathcal{A}$ is reflexive. Because $\mathcal{A}_{| U_\alpha}$ and $\mathcal{A}_\alpha^\inv$ are reflexive and are isomorphic in codimension 1 (since $\mathcal{A}$ is locally free in codimension 1), the isomorphism extends to $U_\alpha$.
    Then, the isomorphism $g_{\alpha\beta}^{[*]} \eta_\alpha^{[*]} \mathcal{A} = g_{\beta\alpha}^{[*]} \eta_\beta^{[*]} \mathcal{A}$ is given by the identity of $\mathcal{A}$, and it follows that this isomorphism commutes with the transition morphism $\theta_{\alpha\beta}$ defined in Definition~\ref{sheafofinvariants} on intersections $U_\alpha \cap U_\beta$. Finally, $\mathcal{A}$ is naturally isomorphic to the sheaf of invariant sections of its induced orbisheaf.\\

    \noindent
    For the point B, if $(U_\alpha, V_\alpha, \eta_\alpha)$ is a l.u.c., we observe by using the compatibility condition between $\mathcal{F}_\alpha$ and a trivializing open subset of $\mathcal{F}^\inv$ on $X_\reg$ that $\mathcal{F}_{\alpha \, |\eta_\alpha^{-1}(X_\reg)} \simeq \eta_\alpha^* \mathcal{F^\inv}_{|X_\reg} $. We have an isomorphism $\mathcal{F}_\alpha \simeq \eta_\alpha^{[*]} \mathcal{F}^\inv$ because both sheaves are reflexive and coincide on $\eta_\alpha^{-1}(X_\reg)$ which is a big open subset. The compatibility condition with transition maps follows from that on triple intersections with trivializing open subsets of $\mathcal{F}^\inv$ on $X_\reg$.\\

    \noindent
    For the point C, if $\mathcal{F}$ is orbi-locally free, then the sheaves $\mathcal{F}_\alpha^\inv$ are locally free by $(i)$ in Proposition~\ref{orbisheafgeneratedinvariant}. Hence, the sheaf $\mathcal{F}^\inv$ is locally free. Moreover, we have $\eta_\alpha^* \mathcal{F}_\alpha^\inv \simeq \mathcal{F}_\alpha$ (as a consequence of \cite[Thm.~4.2.15]{HL10}, there exists a locally free sheaf $\mathcal{G}_\alpha$ such that $\mathcal{F}_\alpha \simeq \eta_\alpha^* \mathcal{G}_\alpha$, and previous arguments show that $\mathcal{F}_\alpha^\inv \simeq (\eta_\alpha^* \mathcal{G}_\alpha)^\inv = \mathcal{G}_\alpha$.). This isomorphism is compatible with the transition maps of $\mathcal{F}$ and that of $(\mathcal{F}^\inv)^\orb$ (these are $g_{\alpha\beta}^* \eta_\alpha^* \theta_{\alpha\beta}$, see Definition~\ref{sheafofinvariants} for notation), by taking local invariant frames. For the converse, as $\mathcal{F}^\inv$ has local trivializations, we get that the orbisheaf $\mathcal{F}$ is locally isomorphic to the trivial $\mathcal{O}_X^{\oplus r}$, that exactly means that $\mathcal{F}$ is orbi-locally free.
\end{proof}

\subsubsection{Description of orbifolds with an orbi-locally free tangent bundle}

In order to give a criterion of developability of a pair, we give the following two results that describe orbifolds whose orbifold tangent sheaf is orbi-locally free:

\begin{lemm}\label{delta0}
    Let $(X, \Delta)$ be an orbifold. Assume that $\mathcal{T}_{(X, \Delta)}$ is orbi-locally free. Then, $\Delta = 0$ and $X$ is smooth.
\end{lemm}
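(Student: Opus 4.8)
The plan is to reduce everything to a single local statement: that each uniformizing morphism $\eta_\alpha \colon V_\alpha \longrightarrow U_\alpha$ of the given smooth orbi-\'etale orbistructure is in fact \'etale everywhere. Granting this, the conclusion is immediate on both counts. On the one hand, if $\eta_\alpha$ is \'etale then $U_\alpha = V_\alpha / G_\alpha$ is a free quotient of a smooth space, hence smooth, so $X$ is smooth. On the other hand, since $\eta_\alpha$ is strictly $\Delta$-adapted it satisfies $\eta_\alpha^* \Delta_i = m_i \Delta'_i$ with $m_i \geq 2$ along each boundary component met by $U_\alpha$; but an \'etale morphism has all ramification indices equal to $1$, which forces the boundary to be empty. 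Thus $\Delta = 0$ and $X$ is smooth.

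The key input is that orbi-local freeness produces, near every point of every chart, a frame of \emph{invariant} vector fields. I fix a l.u.c. $(U_\alpha, V_\alpha, \eta_\alpha)$ and a point $v \in V_\alpha$, and set $x := \eta_\alpha(v)$. Applying orbi-local freeness of $\mathcal{T}_{(X, \Delta)}$ around $x$ through criterion $(ii)$ of Proposition~\ref{orbisheafgeneratedinvariant} to this very chart, one obtains on a neighbourhood of $v$ vector fields $\xi_1, \dots, \xi_n$ that are $G_\alpha$-invariant for the natural action on $\mathcal{T}_{V_\alpha}$ and that form an $\mathcal{O}_{V_\alpha}$-basis of $\mathcal{T}_{V_\alpha}$; in particular their values $\xi_1(v), \dots, \xi_n(v)$ form a basis of $T_v V_\alpha$.

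Next I claim the stabiliser $G_v \subseteq G_\alpha$ of $v$ is trivial. Let $g \in G_v$. The natural $G_\alpha$-action on $\mathcal{T}_{V_\alpha}$ is by pushforward, so the invariance $g_* \xi_i = \xi_i$, evaluated at the fixed point $v$, reads $dg_v\bigl(\xi_i(v)\bigr) = \xi_i(v)$ for every $i$. As the $\xi_i(v)$ form a basis of $T_v V_\alpha$, this gives $dg_v = \mathrm{id}_{T_v V_\alpha}$. Since $g$ has finite order and fixes $v$, it can be linearised in suitable holomorphic coordinates around $v$ (Cartan--Bochner linearisation of a finite-order automorphism at a fixed point), where it coincides with $dg_v = \mathrm{id}$; hence $g$ is the identity near $v$, and by the identity principle $g = \mathrm{id}$ on the connected component of $v$. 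As $G_\alpha$ acts faithfully, $g = 1$. Therefore $G_v = \{1\}$ for every $v$, so $G_\alpha$ acts freely and $\eta_\alpha$ is \'etale, completing the reduction.

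The main obstacle is conceptual rather than computational: one must ensure that the invariant frame furnished by orbi-local freeness is available precisely at the delicate points, namely those lying over $|\Delta|$ and over $X_{\mathrm{sing}}$, and that the $G_\alpha$-module structure on $\mathcal{T}_{V_\alpha}$ entering the invariance condition is the geometric pushforward action coming from the $\alpha = \beta$ compatibility of the orbistructure (equivalently, from the identification $V_{\alpha\alpha} = V_\alpha \times G_\alpha$). Criterion $(ii)$ of Proposition~\ref{orbisheafgeneratedinvariant}, which quantifies over \emph{all} l.u.c.\ around $x$, guarantees the first point; the construction of the orbisheaf $\mathcal{T}_{(X,\Delta)}$ guarantees the second. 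As an alternative to the linearisation step for the smoothness assertion alone, once $\Delta = 0$ is known one may instead invoke Proposition~\ref{factsinvariantsheaves} to identify $\mathcal{T}_{(X,\Delta)}^{\inv}$ with the reflexive tangent sheaf $\mathcal{T}_X$ and deduce that it is locally free, then conclude by a Zariski--Lipman type theorem for klt (quotient) singularities that $X$ is smooth.
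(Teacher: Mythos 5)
Your proof is correct, and it takes a genuinely different (and in fact stronger) route than the paper's. The paper argues in two separate steps: first it dualizes to $\Omega^1_{(X,\Delta)}$ and, at a \emph{general} point of each component $\Delta_i$, writes $\mathrm{d}z_1$ in an invariant frame of the normal form $z_1 \mapsto z_1^{m_i}$ to force $\zeta = 1$, hence $m_i = 1$ and $\Delta = 0$; then, with $\Delta = 0$ in hand, it invokes point C of Proposition~\ref{factsinvariantsheaves} to see that $\mathcal{T}_X \simeq \mathcal{T}_{(X,0)}^{\inv}$ is locally free and concludes smoothness from the Zariski--Lipman theorem for klt spaces \cite[Thm.~1.1]{Dru14}. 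You instead prove the single local statement that every stabiliser $G_v$ in every chart is trivial: the invariant frame forces $dg_v = \mathrm{id}$, and Cartan--Bochner linearisation of the finite-order automorphism $g$ at $v$ then kills $g$. This unifies the two conclusions (étale charts give both $\Delta = 0$ and smoothness of the quotient at once), works at \emph{every} point rather than only at general points of $|\Delta|$, and avoids the appeal to \cite{Dru14} at the cost of the linearisation lemma; the paper's first step is essentially the special case of your stabiliser computation for the transverse cyclic action at a general point of $\Delta_i$. The only points requiring the care you already flag are that criterion $(ii)$ of Proposition~\ref{orbisheafgeneratedinvariant} supplies the invariant frame at \emph{all} points of $V_\alpha$ (including those over $X_{\sing}$ and $|\Delta|$), and that passing from ``$g = \mathrm{id}$ near $v$'' to ``$g = 1$ in $G_\alpha$'' uses connectedness of $V_\alpha$, which is the standard convention for local uniformizing charts.
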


\begin{proof}
    The orbi-locally freeness of $\mathcal{T}_{(X, \Delta)}$ is equivalent to that of $\Omega^1_{(X, \Delta)}$, because both are dual orbisheaves. Hence, we can assume that $\Omega^1_{(X, \Delta)}$ is orbi-locally free. 
    We denote: $\displaystyle \Delta =  \sum_i \left( 1 - \frac{1}{m_i} \right) \Delta_i$. At a general point $x \in \Delta_i$, we have an adapted l.u.c.$(U_\alpha, V_\alpha, \eta_\alpha)$ above $x$ of degree $m_i$ (it means that its normal form is $\eta_\alpha(z_1, \underline{z}) = (z_1^{m_i}, \underline{z})$). We consider $(s_1, \ldots, s_{n})$ a system of generators of $\Omega^1_{V_\alpha, y}$ at a point $y \in \eta_\alpha^{-1}(x)$, invariant under the action of $\Z / (m_i)$. Let $\displaystyle \mathrm{d}z_1 := \sum_k f_k s_k$, with $f_{k_0}(0) \neq 0$ for some $1 \leq k_0 \leq n$.\\
    The action of the class $\left[ 1 \right] \in \Z / (m_i) $ on $\mathrm{d}z_1$ is: $\left[ 1 \right]. \mathrm{d}z_1 = \zeta^{-1} \mathrm{d}z_1$, where $\displaystyle \zeta = e^{\sqrt{-1}\frac{2\pi}{m_i}}$. On the other hand, it is: $\displaystyle \left[ 1 \right]. \mathrm{d}z_1 = \sum f_k(\zeta^{-1} z_1, \underline{z}) s_k$. By evaluating at 0, we have $\zeta f_{k_0}(0) = f_{k_0}(0)$, then $\zeta = 1$, hence $m_i = 1$. It is true for each $i$, so $\Delta = 0$.

    Now, we have $\mathcal{T}_{(X, 0)}$ is orbi-locally free because so is $\Omega^1_{(X, 0)}$. By the point C of Proposition~\ref{factsinvariantsheaves}, we get that $\mathcal{T}_{(X, 0)}^\inv$ is locally free, and it coincides on $X_\reg$ with $\mathcal{T}_X$. Hence, they are isomorphic because both are reflexive, and by \cite[Thm.~1.1]{Dru14}, $X$ is smooth.
\end{proof}

\begin{coro}\label{corodevelopability}
    Let $(X, \Delta)$ be an orbifold. Assume that there exists a surjective morphism of orbibundles: $ \xymatrix{\mathcal{O}_X^{\oplus r} \ar@{->>}[r] & \mathcal{T}_{(X, \Delta)}
    }$.
    Then, $\Delta = 0$ and $X$ is smooth.
\end{coro}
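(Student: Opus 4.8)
The plan is to reduce the statement to Lemma~\ref{delta0} by proving that the hypothesis forces $\mathcal{T}_{(X,\Delta)}$ to be orbi-locally free; once this is established, Lemma~\ref{delta0} immediately yields $\Delta = 0$ and the smoothness of $X$. So the whole task is to upgrade the surjection $u \colon \mathcal{O}_X^{\oplus r} \twoheadrightarrow \mathcal{T}_{(X,\Delta)}$, which only provides \emph{generators}, into the existence of a local \emph{basis} of invariant sections as required by criterion~$(iii)$ of Proposition~\ref{orbisheafgeneratedinvariant}.

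First I would fix a point $x \in X$, a l.u.c. $(U,V,\eta)$ around $x$ with Galois group $G := G_\eta$, and a point $y \in \eta^{-1}(x)$. In this chart the trivial orbibundle is $\mathcal{O}_V^{\oplus r}$ and the component $u_\eta \colon \mathcal{O}_V^{\oplus r} \to \mathcal{T}_V$ is surjective. The standard sections $e_1,\dots,e_r$ of $\mathcal{O}_V^{\oplus r}$ are $G$-invariant (they descend from $U = V/G$), and since $u_\eta$ is $G$-equivariant --- this being exactly the compatibility of the morphism of orbisheaves with the transition function for $\alpha=\beta$ --- their images $\sigma_i := u_\eta(e_i) \in H^0(V, \mathcal{T}_V)$ are $G$-invariant vector fields. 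By surjectivity they generate the fibre $T_y V$, so I can extract $n = \dim X$ of them, say $\sigma_{i_1},\dots,\sigma_{i_n}$, forming a basis of $T_y V$ and hence of $\mathcal{T}_V$ in a neighbourhood of $y$.

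The crucial point is to promote this into a basis over a $G$-saturated neighbourhood $\eta^{-1}(U_0)$. Let $W \subseteq V$ be the open locus where $\sigma_{i_1},\dots,\sigma_{i_n}$ are pointwise linearly independent. Because each $\sigma_{i_j}$ is $G$-invariant, one has $\sigma_{i_j}(g\cdot p) = \mathrm{d}g_p\big(\sigma_{i_j}(p)\big)$ for every $g \in G$, and since $\mathrm{d}g_p$ is an isomorphism this shows that $W$ is $G$-invariant. As $\eta$ is Galois, the fibre $\eta^{-1}(x)$ is the single orbit $G\cdot y \subseteq W$; setting $U_0 := \eta(W)$ (open, with $\eta^{-1}(U_0)=W$ since $W$ is saturated) I obtain a neighbourhood of $x$ on which $\sigma_{i_1},\dots,\sigma_{i_n}$ is a basis of $\mathcal{T}_V$ consisting of $G$-invariant sections. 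This is precisely assertion~$(iii)$ of Proposition~\ref{orbisheafgeneratedinvariant}, so $\mathcal{T}_{(X,\Delta)}$ is orbi-locally free around $x$; as $x$ was arbitrary it is orbi-locally free, and Lemma~\ref{delta0} concludes.

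I expect the main obstacle to be exactly this propagation step: a sub-basis chosen among the invariant generators at one point $y$ need not a priori remain a basis elsewhere in the fibre, and since taking invariants is only left exact one cannot simply invoke surjectivity of $u^{\inv}$. The resolution rests on two structural facts --- the individual $G$-invariance of each $\sigma_{i_j}$ (which makes the non-degeneracy locus $G$-stable through the relation $\sigma_{i_j}(g\cdot p)=\mathrm{d}g_p(\sigma_{i_j}(p))$) and the Galois property of $\eta$ (which makes $\eta^{-1}(x)$ a single orbit) --- so that a local frame automatically spreads over the whole fibre.
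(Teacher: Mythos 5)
Your proposal is correct and follows essentially the same route as the paper: reduce to Lemma~\ref{delta0} via criterion~$(iii)$ of Proposition~\ref{orbisheafgeneratedinvariant}, take the $G$-invariant images of the canonical basis, extract a local frame at a point of the fibre, and use the invariance of the sections to spread the frame over a $G$-saturated neighbourhood. The only cosmetic difference is that the paper realizes the saturated neighbourhood as $\bigcup_{g\in G} g\cdot V^\circ$ whereas you take the (equally valid) $G$-stable non-degeneracy locus of the chosen sections.
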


\begin{proof}
    By Lemma~\ref{delta0}, it suffices to show that $\mathcal{T}_{(X, \Delta)}$ is orbi-locally free. By Proposition~\ref{orbisheafgeneratedinvariant}, it suffices to construct for any l.u.c.$(U_\alpha, V_\alpha, \eta_\alpha)$ and $y \in V_\alpha$ a basis of $G_\alpha$-invariant sections for $\mathcal{T}_{V_\alpha}$ on a $G_\alpha$-invariant open neighborhood $V$ of $y$. By assumption, we have a morphism of $G_\alpha$-sheaves: $\xymatrix{
    \mathcal{O}_{V_\alpha}^{\oplus r} \ar@{->>}[r] & \mathcal{T}_{V_\alpha}
    }$.
    We denote by $t_1, \ldots, t_r$ the image of the canonical basis $e_1, \ldots, e_r$ by this morphism. Hence, these sections are $G_\alpha$-invariant, and they generate the tangent bundle. It follows that there exists a neighbourhood of $y$ denoted $V^\circ \hookrightarrow V_\alpha$ such that we can extract $n$ sections (for example $t_1, \ldots t_n$) that form a local frame of $\mathcal{T}_{V_\alpha}$. Moreover, if $g \in G_\alpha$, then the family $(g(t_1), \ldots, g(t_n))$ forms a trivialization of $\mathcal{T}_{V_\alpha}$ around $g(y)$. Because the sections $t_i$ are $G_\alpha$-invariant, we infer that $(t_1, \ldots, t_n)$ provides a local frame on a $G_\alpha$-invariant open neighborhood $V = \bigcup g \cdot V^\circ$, providing the sought trivialization of ${\mathcal{T}_{V_\alpha}}_{| V}$ by $G_\alpha$-invariant sections. 
\end{proof}

\begin{exem}
    We give the example of a cone over a smooth conic: the group $\lbrace \pm 1 \rbrace$ acts on the affine plane $\C^2$, the quotient space $X$ is isomorphic to $(w^2 - uv = 0) \subseteq \C^3$ (we denote by $(u,v,w)$ the coordinates in the target space) and the quotient map is given by $\pi \colon (x, y) \in \C^2 \longmapsto (x^2, y^2, xy) \in X $. This map is quasi-étale, and the l.u.c. $(X, \C^2, \pi)$ provides an orbi-étale orbistructure for the pair $(X, 0)$. We have $\Delta = 0$ and $X$ is singular at 0. One can show that there does not exist a surjective map $\xymatrix{\mathcal{O}_{\C^2}^{\oplus 2} \ar@{->>}[r] & \mathcal{T}_{\C^2} }$ which is $G$-equivariant and surjective. Indeed, if such a map exists, then the image of the canonical basis provides vector fields that generate $\mathcal{T}_{\C^2}$, and they are invariant by the action of $\lbrace \pm 1 \rbrace$. Hence, such vector fields have their germs at 0 of the form:
    $$
    f(x^2, y^2) x \frac{\partial}{\partial x} + g(x^2, y^2) y \frac{\partial}{\partial x} + h(x^2, y^2) x \frac{\partial}{\partial y} + k(x^2, y^2) y \frac{\partial}{\partial y}, 
    $$
    where $f, g, h, k \in \mathcal{O}_{\C^2, 0}$. It belongs to $\mathfrak{m} \mathcal{T}_{\C^2, 0}$, where $\mathfrak{m}$ is the maximal ideal of $\mathcal{O}_{\C^2, 0}$. Hence, it can not generate $\mathcal{T}_{\C^2}$ at $0$.

    Moreover, this example illustrates that, in the case $\Delta = 0$, the functors $(\cdot)^{[\orb]}$ and $(\cdot)^\inv$ are essentially surjective by points A and B of Proposition~\ref{factsinvariantsheaves}, but they are not equivalences of categories. Indeed, let us show that the functor $(\cdot)^{[\orb]} $ is not exact, because it does not preserve surjective morphisms. Let us consider a surjective morphism $\xymatrix{ \mathcal{O}_X^{\oplus r} \ar@{->>}[r] & \mathcal{T}_X }$. Its reflexive pullback by $\pi$ provides a family of vector fields of $\C^2$ which are invariant by the action of $\lbrace \pm 1 \rbrace$. The resulting morphism is no more surjective, as its image is included in $\mathfrak{m}\mathcal{T}_{\C^2, 0}$.
\end{exem}

\subsubsection{Flatness and developability}

We will use classical tools in orbifold differential geometry (orbifold hermitian metrics and connections, orbifold Chern classes (the $i$-th orbifold Chern class of an orbibundle $\mathcal{F}$ is denoted by $c_i^\orb(\mathcal{F})$), orbifold de Rham cohomology, \ldots) which are detailed in \cite[Sect.~2]{Bla96}.

\noindent
We have the following criterion of developability of an orbifold.

\begin{prop}[Developability criterion for an orbifold]\label{Developability}
    Let $(X, \Delta)$ be an orbifold and let $\mathcal{F}$ be a flat orbibundle of rank $r$ on $(X, \Delta)$. Let us assume that there exists a surjective morphism of orbibundles: $\xymatrix{\mathcal{F} \ar@{->>}[r] &  \mathcal{T}_{(X, \Delta)}}$. Then, the universal cover $(\widetilde{X}_\Delta, \widetilde{\Delta})$ satisfies $\widetilde{\Delta} = 0$ and $\widetilde{X}_\Delta$ is smooth.
\end{prop}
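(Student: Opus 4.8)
The plan is to transport the flatness of $\mathcal{F}$ to the universal cover and there apply Corollary~\ref{corodevelopability}. Recall from Remark~\ref{orbistructureofuniversalcover} that the universal cover $(\widetilde{X}_\Delta, \widetilde{\Delta})$ carries a smooth orbi-\'etale orbistructure built from the \emph{same} local charts $V_\alpha$ as $(X, \Delta)$, with the quotient maps $\widetilde{\eta}_\alpha$ now taken with respect to subgroups $H_\alpha \subseteq G_\alpha$. The natural covering map $p \colon (\widetilde{X}_\Delta, \widetilde{\Delta}) \longrightarrow (X, \Delta)$ is an orbifold morphism, so I can pull back orbibundles along it. Let me write $\widetilde{\mathcal{F}} := p^* \mathcal{F}$ and note $p^* \mathcal{T}_{(X, \Delta)} = \mathcal{T}_{(\widetilde{X}_\Delta, \widetilde{\Delta})}$, since the tangent orbisheaf is defined chart-locally by the $V_\alpha$ and these are unchanged by the cover.

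\textbf{Pulling back the surjection and the flat structure.} First I would pull back the surjective morphism $\mathcal{F} \twoheadrightarrow \mathcal{T}_{(X, \Delta)}$ along $p$. Since each chart map $p_{\alpha\alpha'}$ is, at the level of the uniformizing spaces $V_\alpha$, an isomorphism (the charts coincide by Remark~\ref{orbistructureofuniversalcover}), the pulled-back morphism $\widetilde{\mathcal{F}} \twoheadrightarrow \mathcal{T}_{(\widetilde{X}_\Delta, \widetilde{\Delta})}$ is again surjective chart by chart, hence a surjection of orbibundles. Next I would argue that $\widetilde{\mathcal{F}}$ is flat: a flat orbibundle is one equipped with a flat orbifold connection (equivalently, given by a representation of $\pi_1^{\orb}(X, \Delta)$), and pulling back a flat connection along the orbifold morphism $p$ yields a flat connection on $\widetilde{\mathcal{F}}$. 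Concretely, $\widetilde{\mathcal{F}}$ corresponds to the restriction of the monodromy representation to $\pi_1^{\orb}(\widetilde{X}_\Delta, \widetilde{\Delta}) = \{1\}$, so $\widetilde{\mathcal{F}}$ is in fact the \emph{trivial} flat orbibundle $\mathcal{O}_{\widetilde{X}_\Delta}^{\oplus r}$; but I only need its flatness, or even just the existence of the surjection.

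\textbf{Concluding via the developability criterion.} Having produced a surjective morphism of orbibundles
$$
\xymatrix{
\mathcal{O}_{\widetilde{X}_\Delta}^{\oplus r} \ar@{->>}[r] & \mathcal{T}_{(\widetilde{X}_\Delta, \widetilde{\Delta})}
}
$$
(using the identification $\widetilde{\mathcal{F}} \simeq \mathcal{O}_{\widetilde{X}_\Delta}^{\oplus r}$ coming from triviality of the flat bundle over the simply connected orbifold), I can apply Corollary~\ref{corodevelopability} to the orbifold $(\widetilde{X}_\Delta, \widetilde{\Delta})$ directly. That corollary yields $\widetilde{\Delta} = 0$ and $\widetilde{X}_\Delta$ smooth, which is exactly the conclusion. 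As a fallback not relying on triviality, I could instead invoke that a flat orbibundle over a simply connected orbifold splits as a direct sum of trivial orbibundles, producing the surjection from $\mathcal{O}^{\oplus r}$.

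\textbf{The main obstacle.} The delicate point is justifying that the flat orbibundle $\widetilde{\mathcal{F}}$ on the universal cover is globally trivial, i.e.\ that flat orbibundles on a simply connected orbifold are isomorphic to $\mathcal{O}^{\oplus r}$. This requires a developing-map/monodromy argument in the orbifold category: one must know that the orbifold fundamental group $\pi_1^{\orb}(\widetilde{X}_\Delta, \widetilde{\Delta})$ is trivial (true by construction of the universal cover) and that flat orbibundles are classified by representations of this group, so that triviality of the group forces triviality of the bundle. I would need to make precise the correspondence between flat orbibundles and $\pi_1^{\orb}$-representations in the present chart-based formalism, checking that the flat transition data descend correctly through the quasi-\'etale maps $g_{\alpha\beta}$; this is where the bulk of the technical care lies, even though the geometric idea is standard.
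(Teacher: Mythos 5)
Your proposal follows essentially the same route as the paper: pull back along the universal covering map, use the orbifold Riemann--Hilbert correspondence to identify the flat orbibundle on the simply connected cover with $\mathcal{O}_{\widetilde{X}_\Delta}^{\oplus r}$, transport the surjection onto the tangent orbisheaf, and conclude via Corollary~\ref{corodevelopability}. The point you flag as the main technical obstacle (classifying flat orbibundles by representations of $\pi_1^{\orb}$) is exactly what the paper handles by citing the orbifold Riemann--Hilbert correspondence of \cite[Thm.~2.35]{SY22}, so your argument is correct and complete modulo that same reference.
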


\begin{proof}
    Let us consider the universal cover map: $\pi \colon (\widetilde{X}_\Delta, \widetilde{\Delta}) \longrightarrow (X, \Delta)$. The pullback $\pi^* \mathcal{F}$ remains flat on $(\widetilde{X}_\Delta, \widetilde{\Delta})$. By the Riemann--Hilbert correspondence in the orbifold setting that states an equivalence between representations of $\pi_1^\orb(X, \Delta)$ and flat orbibundles on $(X, \Delta)$ (see \cite[Thm.~2.35]{SY22}), we get that it is a trivial bundle $\mathcal{O}_{\widetilde{X}_\Delta}^{\oplus r}$. Moreover, we have $\pi^* \mathcal{T}_{(X, \Delta)} = \mathcal{T}_{(\widetilde{X}_\Delta, \widetilde{\Delta})}$. Hence, we have a surjective morphism 
    $
    \xymatrix{
    \mathcal{O}_{\widetilde{X}_\Delta}^{\oplus r} \ar@{->>}[r] & \mathcal{T}_{(\widetilde{X}_\Delta, \widetilde{\Delta})}
    }.
    $
    We are in position to apply Corollary~\ref{corodevelopability}, and we get $\widetilde{\Delta} = 0$ and $\widetilde{X}_\Delta$ is smooth.
\end{proof}

\begin{coro}\label{Endcanflat}
    Let $(X, \Delta)$ be an orbifold such that the orbibundle $\End(\mathcal{E}_{(X, \Delta)})$ is flat. Then, $\widetilde{X}_\Delta$ is smooth and $\widetilde{\Delta} = 0$.
\end{coro}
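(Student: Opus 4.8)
The plan is to deduce this from the developability criterion (Proposition~\ref{Developability}), applied to the flat orbibundle $\mathcal{F} := \End(\mathcal{E}_{(X, \Delta)})$, which has rank $(n+1)^2$ where $n = \dim X$. The only ingredient not already supplied by the hypothesis is a surjection of orbibundles $\End(\mathcal{E}_{(X, \Delta)}) \twoheadrightarrow \mathcal{T}_{(X, \Delta)}$, and I would construct this directly from the defining extension.

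First, recall from Example~\ref{deforbifoldextension} that $\mathcal{E} := \mathcal{E}_{(X, \Delta)}$ fits into a short exact sequence of orbisheaves
$$
0 \longrightarrow \mathcal{O}_X \overset{\iota}{\longrightarrow} \mathcal{E} \overset{p}{\longrightarrow} \mathcal{T}_{(X, \Delta)} \longrightarrow 0,
$$
in which $\iota$ and $p$ are morphisms of orbisheaves. I would then define a morphism $\mu \colon \End(\mathcal{E}) \longrightarrow \mathcal{T}_{(X, \Delta)}$ chart by chart via $\mu(\phi) = p \circ \phi \circ \iota$, i.e.\ by sending an endomorphism of $\mathcal{E}$ to the composition $\mathcal{O}_X \to \mathcal{E} \to \mathcal{E} \to \mathcal{T}_{(X, \Delta)}$, where the outer arrows are $\iota$ and $p$; this is an element of $\mathcal{H}om(\mathcal{O}_X, \mathcal{T}_{(X, \Delta)}) \simeq \mathcal{T}_{(X, \Delta)}$. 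Since $\iota$ and $p$ are orbisheaf morphisms, the collection $(\mu_\alpha)_\alpha$ is automatically compatible with the transition functions of $\End(\mathcal{E})$ and of $\mathcal{T}_{(X, \Delta)}$, so that $\mu$ is a genuine morphism of orbibundles.

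The core step is to check that $\mu$ is surjective in the sense of the definition, i.e.\ that each $\mu_\alpha$ is surjective. As both orbibundles are locally free on the smooth charts $V_\alpha$, it suffices to verify surjectivity fibrewise, at each $y \in V_\alpha$. There the fibre map sends an endomorphism $\phi$ of the $(n+1)$-dimensional space $\mathcal{E}_\alpha(y)$ to $p\big(\phi(\iota(1))\big) \in \mathcal{T}_\alpha(y)$. Given any target vector $v \in \mathcal{T}_\alpha(y)$, I would lift it to some $w \in \mathcal{E}_\alpha(y)$ using the fibrewise surjectivity of $p$, and then choose any linear endomorphism $\phi$ with $\phi(\iota(1)) = w$; this is possible because $\iota$ is a subbundle inclusion, so $\iota(1)$ is a nonzero vector of $\mathcal{E}_\alpha(y)$. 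Then $\mu_\alpha(\phi)(y) = v$, so $\mu_\alpha$ is surjective on fibres and hence surjective as a morphism of locally free sheaves.

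With the surjection $\End(\mathcal{E}_{(X, \Delta)}) \twoheadrightarrow \mathcal{T}_{(X, \Delta)}$ in hand and $\End(\mathcal{E}_{(X, \Delta)})$ flat by hypothesis, Proposition~\ref{Developability} immediately yields $\widetilde{\Delta} = 0$ and $\widetilde{X}_\Delta$ smooth. The only genuine point requiring care is the surjectivity of $\mu$, but this is elementary once reduced to fibres; both the flatness and the compatibility with the orbistructure come for free, the former from the hypothesis and the latter from the fact that $\iota$ and $p$ are orbisheaf morphisms.
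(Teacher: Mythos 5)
Your proposal is correct and follows essentially the same route as the paper: the map $\phi \mapsto p \circ \phi \circ \iota$ is exactly the composition $\End(\mathcal{E}_{(X,\Delta)}) \twoheadrightarrow \mathcal{E}_{(X,\Delta)} \twoheadrightarrow \mathcal{T}_{(X,\Delta)}$ that the paper obtains by dualizing the inclusion $\mathcal{O}_X \hookrightarrow \mathcal{E}_{(X,\Delta)}$, tensoring with $\mathcal{E}_{(X,\Delta)}$, and composing with the quotient map, after which both arguments conclude by Proposition~\ref{Developability}. Your explicit fibrewise verification of surjectivity is a fine substitute for the paper's remark that these operations preserve surjections of orbibundles.
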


\begin{proof}
    We have a surjective morphism $\xymatrix{\mathcal{E}_{(X, \Delta)}^\vee \ar@{->>}[r] & \mathcal{O}_X } $ obtained by applying the dual functor to the short exact sequence defining $\mathcal{E}_{(X, \Delta)}$. By tensoring with $\mathcal{E}_{(X, \Delta)}$, we get a surjective morphism $\xymatrix{\End(\mathcal{E}_{(X, \Delta)}) \ar@{->>}[r] & \mathcal{E}_{(X, \Delta) } } $. These operations preserve surjective maps as we deal with orbibundles. By composition, we get a surjective morphism $\xymatrix{\End(\mathcal{E}_{(X, \Delta)}) \ar@{->>}[r] & \mathcal{T}_{(X, \Delta) } }$, and we conclude by using Proposition~\ref{Developability}.
\end{proof}

\begin{rema}
    Let $(X, \Delta, \omega)$ be a compact Kähler orbifold. One can define slope-stability (resp. semistability, polystability) with respect to $\left[ \omega \right]$ for torsion free orbisheaves on orbifolds (see \cite[Def.~2.29]{GT22}). In this setting, the Kobayashi--Hitchin correspondence holds, so that polystable orbibundles can be endowed with Hermite--Einstein metrics with respect to $\omega$ by \cite[Theorem~4.2]{ES18}.
    
    Now, let us assume that the Kähler orbifold $(X, \Delta, \omega)$ satisfies the Miyaoka--Yau equality and the canonical extension is polystable with respect to $\left[ \omega \right] $. Then, the equality case in the Kobayashi--Lübke inequality applied to $\mathcal{E}_{(X, \Delta)}$ (see \cite[Thm.~4.4.7]{Kob87}) shows that it is projectively flat. It follows by adapting local computation of \cite[Prop.~1.2.9]{Kob87} that $\End(\mathcal{E}_{(X, \Delta)})$ is flat, and the universal cover $(\widetilde{X}_\Delta, \widetilde{\Delta})$ satisfies $\widetilde{X}_\Delta$ is smooth and $\widetilde{\Delta} = 0$ as a consequence of Corollary \ref{Endcanflat}. Finally, by pulling back $\End(\mathcal{E}_{(X, \Delta)}) $ on the universal cover, we get that $\End(\mathcal{E}_{\widetilde{X}_\Delta})$ is flat. Hence, $\mathcal{E}_{\widetilde{X}_\Delta}$ is projectively flat and simply connected, so that there exists a line bundle $\mathcal{L}$ such that $\mathcal{E}_{\widetilde{X}_\Delta} = \mathcal{L}^{\oplus n+1}$. Finally, we have $c_1(X) = (n+1)c_1(L)$ so that $\widetilde{X}_\Delta = \mathbb{P}^n$ by \cite[Cor. of Thm.~1.1]{KO73}.
\end{rema}

\section{Klt pairs}

\begin{defi}\label{Defkltpair}
    Let $(X, \Delta)$ be a pair. We say that the pair is \emph{klt} if:
    \begin{itemize}
        \item the Weil $\Q$-divisor $K_X + \Delta$ is $\Q$-Cartier.
        \item the discrepancies are greater than -1. More precisely, for a log resolution $\mu \colon \widehat{X} \longrightarrow X$ of $(X, \Delta)$, we set:
        $$
        K_{\widehat{X}} = \mu^* (K_X + \Delta) + \sum_i a_i E_i,
        $$
        where $\displaystyle \sum_i E_i = \mathrm{Exc}(\mu) + \mu^{-1}_* \Delta$ is the sum of the exceptional divisor of $\mu$ denoted by $\mathrm{Exc}(\mu)$, and the support of the strict transform of $\Delta$ denoted by $\mu^{-1}_* \Delta$. Then, we have $a_i > -1$ for each $i$.
    \end{itemize}
    We say that the pair is \emph{log Fano} if $(X, \Delta)$ is klt, $X$ is projective and $-(K_X + \Delta)$ is $\Q$-ample.\\
\end{defi}

There are different ways to define orbistructures on $(X, \Delta)$. The first construction is an orbi-\'etale one, due to \cite[Lemma~14]{CGG24}. Let us recall its construction.

\begin{rema}\label{firstorbistructurekltpair}
The $\Q$-divisor $K_X + \Delta$ is $\Q$-Cartier, hence, for an integer $m \gg 1$, we can cover $X$ by open sets $U_\alpha$ such that $ m(K_{U_\alpha} + \Delta_{| U_\alpha})$ is principal. The cyclic covering map of degree $m$ branched along this principal divisor provides an orbi-\'etale map $\eta_\alpha \colon V_\alpha \longrightarrow U_\alpha$. One can show that the morphisms are compatible on intersections, by studying local forms of maps above general points of $\Delta$. The collection $(U_\alpha, V_\alpha, \eta_\alpha)$ provides us with an orbi-\'etale orbistructure.
\end{rema}

We can refine this orbistructure to get a smooth one in codimension 2.

The second construction yields a strictly adapted orbistructure. It is a consequence of the following proposition.

\begin{prop}\label{elimdelta}{\cite[Prop.~13]{CGG24}}
Let $X$ be a normal projective variety, and $\Delta$ a Weil $\Q$-divisor with standard coefficients such that $K_X + \Delta$ is $\Q$-Cartier. 
\\
Then, there exists a very ample line bundle $L$ on $X$ such that for general $H \in |L|$, there exists a cyclic Galois morphism $f \colon Y \longrightarrow X$ (we denote by $N$ its degree) such that $f$ is orbi-\'etale for $\displaystyle \left( X, \Delta + \left( 1 - \frac{1}{N} \right) H \right)$. Moreover, if $(X, \Delta)$ is klt, so are $\displaystyle \left( X, \Delta + \left( 1 - \frac{1}{N}\right) H \right)$ and $(Y, 0)$.
\end{prop}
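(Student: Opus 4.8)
The plan is to produce a single cyclic cover of degree $N$ that ramifies to order exactly $m_i$ over each $\Delta_i$ and to order exactly $N$ over the auxiliary divisor $H$, and is \'etale elsewhere on $X_\reg$; the coefficient $1 - \frac{1}{N}$ on $H$ is forced by full ramification of order $N$. First I would fix an integer $N$ divisible by every $m_i$ and by the Cartier index of $K_X + \Delta$, and set $D_{\mathrm{red}} := \sum_i \Delta_i$ and $\Delta' := \sum_i (N/m_i)\Delta_i$, which is an integral Weil divisor since $m_i \mid N$. For a degree-$N$ cyclic cover attached to a Weil divisor class $A$ with $\mathcal{O}_X(\Delta' + H) \cong \mathcal{O}_X(NA)$, the ramification index over a prime component of multiplicity $c$ in the branch divisor is $N/\gcd(N,c)$; since $\gcd(N, N/m_i) = N/m_i$ this yields order $m_i$ over $\Delta_i$, while taking $H$ with multiplicity $1$ yields order $N$ over $H$. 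Thus everything reduces to finding a very ample $H$ making $\Delta' + H$ divisible by $N$ in $\mathrm{Cl}(X)$.

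The crucial point, and the step I expect to be the main obstacle, is that $H$ must simultaneously be $\Q$-Cartier (otherwise $K_X + \Delta + (1-\frac{1}{N})H$ is not $\Q$-Cartier and $(X, \Delta + (1-\frac{1}{N})H)$ is not a klt pair) and render $\Delta' + H$ an $N$-th multiple in the class group, whereas $\Delta'$ itself is typically not $\Q$-Cartier. This is resolved by the identity $\Delta' = N(K_X + D_{\mathrm{red}}) - N(K_X + \Delta)$, which follows from $N(D_{\mathrm{red}} - \Delta) = \sum_i (N/m_i)\Delta_i = \Delta'$. Since $N(K_X + \Delta)$ is Cartier by the choice of $N$, this gives $[\Delta'] \equiv -[N(K_X+\Delta)] \pmod{N\,\mathrm{Cl}(X)}$ with $-N(K_X+\Delta)$ a Cartier class. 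I would therefore pick a very ample Cartier divisor $A_0$ and set $L := N(K_X + \Delta) + N A_0$, which is Cartier (as $N(K_X+\Delta)$ is) and very ample for $A_0$ sufficiently positive. For $H \in |L|$ one then has $\Delta' + H \sim N(K_X + D_{\mathrm{red}} + A_0) =: NA$ with $A \in \mathrm{Cl}(X)$, and $H$ is very ample, hence Cartier. A Bertini argument guarantees that a general $H \in |L|$ is reduced, shares no component with $\Delta$, and avoids the codimension-$\geq 2$ loci $\mathrm{Sing}(X)$ and the singular locus of $|\Delta|$.

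Next I would build the degree-$N$ cyclic cover $f \colon Y \to X$ associated to the reflexive sheaf $\mathcal{O}_X(A)$, which is a line bundle over the big open set $X_\reg$, together with a section of $\mathcal{O}_X(NA) \cong \mathcal{O}_X(\Delta' + H)$ cutting out $\Delta' + H$, taking $Y$ to be the normalization. Over $X_\reg$ this is the usual cyclic cover, $Y$ is normal, and $f$ is Galois cyclic with group $\Z/N$. The ramification computation above gives $f^*\Delta_i = m_i \Delta'_i$ and $f^* H = N H'$ with $\Delta'_i, H'$ reduced, so $f$ is strictly adapted for $(X, \Delta + (1-\frac{1}{N})H)$; and over $X^* = X_\reg \setminus (|\Delta| \cup H)$ the cover is unramified, whence $f$ is orbi-\'etale for this pair.

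Finally, for the klt statements: $(X, \Delta + (1-\frac{1}{N})H)$ is klt because $H$ is a general member of a base-point-free (indeed very ample) linear system added with coefficient $1 - \frac{1}{N} < 1$, so a log resolution of $(X,\Delta)$ adapted to a general $H$ keeps every discrepancy $> -1$. For $(Y,0)$, the ramification bookkeeping yields the Hurwitz-type identity $K_Y = f^*(K_X + \Delta + (1-\frac{1}{N})H)$: over $\Delta_i$ the ramification divisor contributes $(m_i-1)\Delta'_i = f^*((1-\frac{1}{m_i})\Delta_i)$ and over $H$ it contributes $(N-1)H' = f^*((1-\frac{1}{N})H)$. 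Since $K_X + \Delta + (1-\frac{1}{N})H$ is $\Q$-Cartier (because $K_X+\Delta$ is and $H$ is Cartier), the standard behaviour of klt singularities under finite covers satisfying $K_Y = f^*(K_X + B)$ shows that $(X, \Delta + (1-\frac{1}{N})H)$ is klt if and only if $(Y,0)$ is, which gives the last assertion.
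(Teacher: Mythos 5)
The paper itself contains no proof of this statement---it is quoted from \cite[Prop.~13]{CGG24}---and your argument reproduces, correctly, essentially the proof given there: the identity $\sum_i (N/m_i)\Delta_i = N(K_X + D_{\mathrm{red}}) - N(K_X+\Delta)$ is exactly the device used to make the branch divisor $N$-divisible in $\mathrm{Cl}(X)$ after adding a general $H \in |N(K_X+\Delta) + NA_0|$, and your ramification computation, Hurwitz formula and klt bookkeeping via \cite[Prop.~5.20]{KM08} are all sound. The only point worth making explicit is that $Y$ is irreducible (so that $f$ genuinely has degree $N$ and is cyclic Galois of that degree), which follows because $H$ appears with multiplicity one in the branch divisor, forcing total ramification of order $N$ over $H$.
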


This second orbistructure consists of global morphisms, in the sense that maps $f \colon Y \longrightarrow X$ are onto and have a good behaviour with respect to $\Delta$. In order to prove that a log Fano pair $(X, \Delta)$ is orbifold under the assumptions of the main theorem, we will show that the varieties $Y$ are smooth away from a divisor, following the strategy of \cite[Sect.~5.3, Direction (1.3.1) $\Longrightarrow$ (1.3.2)]{GKP22}.

\subsection{Adapted canonical extension}

Let $(X, \Delta)$ be a klt pair, and $f \colon Y \longrightarrow X$ a $\Delta$-strictly adapted morphism. We recall classical constructions of adapted sheaves initiated by Miyaoka. See \cite[Section.~3]{CKT16} for details.

\begin{defi}
    Let $(X, \Delta)$ be a pair and $f \colon Y \longrightarrow X$ a strictly $\Delta$-adapted morphism. Let $X_0 \subseteq X $ and $\iota \colon Y_0 \hookrightarrow Y$ be the maximal open subsets where $f$ is good in the sense of \cite[Def.~3.5]{CKT16}. The \emph{sheaf of adapted differentials}, denoted by $\Omega^{[1]}_{X, \Delta, f}$, is the following sheaf:
    $$
    \Omega^{[1]}_{X, \Delta, f} = \iota_*  \left[ \left( \mathrm{im} \left( f^* \Omega^1_{X_0} \longrightarrow \Omega^1_{Y_0} \right) \otimes \mathcal{O}(f^* \Delta) \right)\cap \Omega^1_{Y_0} \right],
    $$

    where $\Omega^1_{X_0}$ (resp. $\Omega^1_{Y_0}$) denotes the cotangent sheaf on $X_0$ (resp. on $Y_0$).
    The \emph{adapted tangent sheaf}, denoted by $\mathcal{T}_{X, \Delta, f}$, is its dual $\mathcal{T}_{X, \Delta, f} := {\Omega^{[1]}_{X, \Delta, f}}^\vee$.
\end{defi}

Now, we construct the adapted canonical extension $\mathcal{E}_{X, \Delta, f}$. It is a reflexive sheaf obtained as an extension of $\mathcal{T}_{X, \Delta, f}$ by $\mathcal{O}_Y$ as follows.

Let $m \gg 1$ be such that $-m(K_X + \Delta)$ is Cartier. It provides a \v{C}ech closed cocycle $(h_{\alpha\beta})_{\alpha\beta}$ on $X$. The pullback of the derivative of its logarithm $\left( f^* \frac{\mathrm{d}\, h_{\alpha\beta} }{h_{\alpha\beta}}  \right)$ is a closed cocyle with values in: $\mathrm{im} \left( f^* \Omega^1_X \longrightarrow \Omega^1_Y \right)$. We have a natural morphism from this subsheaf of $\Omega^1_Y$ into $ \Omega^{[1]}_{X, \Delta, f}$. Hence, it gives rise to a class $H^1(Y, \Omega^{[1]}_{X, \Delta, f})$, we denote it by $m f^* c_1(X, \Delta)$. We consider the extension induced by $f^* c_1(X, \Delta)$:
$$
0 \longrightarrow \Omega^{[1]}_{X, \Delta, f} \longrightarrow \mathcal{W}_{X, \Delta, f} \longrightarrow \mathcal{O}_Y \longrightarrow 0.
$$
We use the functor $\mathcal{H}om(-, \mathcal{O}_Y)$ which produces a short exact sequence because $\mathcal{E}xt^1(\mathcal{O}_Y, \mathcal{O}_Y) = 0$.
$$
0 \longrightarrow \mathcal{O}_Y \longrightarrow \mathcal{E}_{X, \Delta, f} \longrightarrow \mathcal{T}_{X, \Delta, f} \longrightarrow 0.
$$
The reflexive sheaf $\mathcal{E}_{X, \Delta, f}$ is the desired sheaf called the \emph{adapted canonical extension}. We have the following properties for this sheaf.

\begin{lemm}\label{orbietextensionproperties}
    With notation as above:
    
     $(i)$ If $f$ is orbi-\'etale, we have $\mathcal{E}_{X, \Delta, f} = \mathcal{E}_Y$ (see \cite[Sect.~4]{GKP22} to see how $\mathcal{E}_Y$ is construct when $Y$ is klt).

     $(ii)$ If $\gamma \colon Y' \longrightarrow Y$ is a quasi-\'etale morphism with $Y'$ normal, we have $\mathcal{E}_{X, \Delta, \gamma \circ f} = \gamma^{[*]} \mathcal{E}_{X, \Delta, f}$ (we define $\gamma^{[*]} \mathcal{E}_{X, \Delta, f} := \left( \gamma^* \mathcal{E}_{X, \Delta, f} \right)^{\vee \vee}$).
\end{lemm}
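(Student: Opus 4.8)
The plan is to establish both properties by reducing to local computations on big open subsets, exploiting reflexivity to extend isomorphisms across codimension-two sets. The guiding principle throughout is that all the sheaves in sight are reflexive, so any morphism or isomorphism constructed on a big open subset extends uniquely, and it suffices to verify the claimed identifications away from a closed subset of codimension at least two.

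\textbf{Proof of $(ii)$.} I would begin with the functorial statement, since it is the more mechanical of the two. The key observation is that the construction of $\mathcal{E}_{X, \Delta, f}$ is \emph{natural} in the covering $f$: it is assembled from the sheaf of adapted differentials $\Omega^{[1]}_{X, \Delta, f}$ together with the extension class $f^* c_1(X, \Delta)$ obtained by pulling back the \v{C}ech cocycle $(\mathrm{d} h_{\alpha\beta}/h_{\alpha\beta})$. First I would check the base-change compatibility of adapted differentials, namely that $\gamma^{[*]} \Omega^{[1]}_{X, \Delta, f} = \Omega^{[1]}_{X, \Delta, \gamma \circ f}$. This holds because $\gamma$ is quasi-\'etale, so over the big open locus where $\gamma$ is \'etale the pullback of differentials is an isomorphism $\gamma^* \Omega^1_{Y_0} \simeq \Omega^1_{Y'_0}$ compatible with the image sheaves and the twists by $\mathcal{O}(f^* \Delta)$; taking reflexive hulls then gives the identification on all of $Y'$. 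Next, since pulling back the cocycle $(f^* \mathrm{d} h_{\alpha\beta}/h_{\alpha\beta})$ along $\gamma$ visibly yields $((\gamma \circ f)^* \mathrm{d} h_{\alpha\beta}/h_{\alpha\beta})$, the extension class $(\gamma \circ f)^* c_1(X, \Delta)$ is the image of $f^* c_1(X, \Delta)$ under the natural map $H^1(Y, \Omega^{[1]}_{X, \Delta, f}) \to H^1(Y', \Omega^{[1]}_{X, \Delta, \gamma \circ f})$ induced by $\gamma^{[*]}$. Applying $\gamma^{[*]}$ to the defining extension of $\mathcal{E}_{X, \Delta, f}$ and using that reflexive pullback by a quasi-\'etale map is exact on the relevant big open set, one obtains the extension defining $\mathcal{E}_{X, \Delta, \gamma \circ f}$, which gives the isomorphism $\gamma^{[*]} \mathcal{E}_{X, \Delta, f} \simeq \mathcal{E}_{X, \Delta, \gamma \circ f}$ after taking reflexive hulls.

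\textbf{Proof of $(i)$.} For the identification with the Greb--Kebekus--Peternell canonical extension $\mathcal{E}_Y$ when $f$ is orbi-\'etale, the crucial input is that an orbi-\'etale map is quasi-\'etale and strictly $\Delta$-adapted, so it is \'etale over $X^* = X_\reg \setminus |\Delta|$. Over $Y^* := f^{-1}(X^*)$ the map $f$ is \'etale, hence $\Omega^{[1]}_{X, \Delta, f}$ restricts to the ordinary (pulled-back) cotangent sheaf $\Omega^1_{Y^*}$: the twist by $\mathcal{O}(f^* \Delta)$ and the intersection with $\Omega^1_{Y_0}$ become trivial away from the boundary precisely because $f^* \Delta$ has no support over $X^*$. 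Thus on the big open set $Y^*$ the adapted differentials coincide with the reflexive differentials $\Omega^{[1]}_Y$ used to build $\mathcal{E}_Y$. I would then match the extension classes: both $f^* c_1(X, \Delta)$ and the class defining $\mathcal{E}_Y$ arise from the same \v{C}ech cocycle $(\mathrm{d} h_{\alpha\beta}/h_{\alpha\beta})$ of $-m(K_X + \Delta)$ pulled back to $Y$ — indeed, $f$ being orbi-\'etale means $f^*(K_X + \Delta) = K_Y$ (the ramification exactly absorbs $\Delta$), so the first Chern class data agree. With the two extensions having isomorphic kernels, quotients, and extension classes on a big open subset, the dualized sequences agree there, and reflexivity extends the isomorphism $\mathcal{E}_{X, \Delta, f} \simeq \mathcal{E}_Y$ to all of $Y$.

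\textbf{Main obstacle.} I expect the genuinely delicate point to be the careful bookkeeping of the "good locus" of $f$ and the twisting by $\mathcal{O}(f^* \Delta)$ near the ramification divisor. The definition of $\Omega^{[1]}_{X, \Delta, f}$ involves the image of $f^* \Omega^1_{X_0}$ intersected with $\Omega^1_{Y_0}$ after twisting, and one must be sure that this construction is insensitive to the choice of compactification of the good locus and that it genuinely reduces to $\Omega^1_{Y^*}$ in the orbi-\'etale case — this requires the normal-form analysis of $f$ at general points of the components of $f^* \Delta$, where the local model is $z \mapsto z^{m_i}$ and the factor $\mathcal{O}(f^* \Delta)$ precisely compensates the logarithmic pole introduced by pulling back $\mathrm{d} z / z$. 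Once this local picture is pinned down, both identifications follow from reflexivity and the naturality of the \v{C}ech cocycle construction; the rest is formal.
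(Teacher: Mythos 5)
Your treatment of $(ii)$ matches the paper's own argument: pull back the \v{C}ech cocycle, identify $\gamma^{[*]}\Omega^{[1]}_{X,\Delta,f}$ with $\Omega^{[1]}_{X,\Delta,\gamma\circ f}$ over a big open subset where $\gamma$ is \'etale and the relevant divisors are in normal form, and extend by reflexivity. That part is fine.

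For $(i)$, however, the argument as written has a real gap. You identify $\Omega^{[1]}_{X,\Delta,f}$ with $\Omega^{[1]}_Y$ over $Y^* := f^{-1}(X^*)$ and call $Y^*$ a big open set, then invoke reflexivity to extend. But $X^* = X_\mathrm{reg}\setminus|\Delta|$ has complement of codimension one whenever $\Delta\neq 0$, so $Y^*$ is \emph{not} big in $Y$: its complement contains the divisor $f^{-1}(|\Delta|)$. Two reflexive sheaves that agree only on $Y^*$ need not be isomorphic --- for instance $\Omega^{[1]}_{X,\Delta,f}$ and the reflexive pullback of $\Omega^{[1]}_X$ also agree there yet differ along the ramification divisor. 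The identification must therefore be verified at general points of $f^{-1}(|\Delta|)$, where the normal form $(z_1,\underline{z})\mapsto(z_1^{m_i},\underline{z})$ shows that the twist by $\mathcal{O}(f^*\Delta)$ converts the generator $z_1^{m_i-1}\mathrm{d}z_1$ of $\mathrm{im}(f^*\Omega^1_X\to\Omega^1_Y)$ into $\mathrm{d}z_1$, so that $\Omega^{[1]}_{X,\Delta,f}$ coincides with $\Omega^1_Y$ on a genuinely big open set. You flag exactly this normal-form computation as the ``main obstacle,'' so you see what is needed, but it must be part of the proof rather than an afterthought: without it the reflexivity-extension step is unjustified. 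The paper outsources this point to \cite[Lemma~18.2]{CGG24}. (A small correction on your heuristic: the twist compensates the \emph{vanishing} of $f^*\mathrm{d}z_1$ along the ramification divisor, not a logarithmic pole of $\mathrm{d}h_{\alpha\beta}/h_{\alpha\beta}$, which is regular since $h_{\alpha\beta}$ is nowhere vanishing.) Your matching of the extension classes via $K_Y=f^*(K_X+\Delta)$ is correct and is exactly the paper's argument.
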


\begin{proof}
$(i)$
When $f$ is orbi-\'etale, we have $\Omega^{[1]}_Y \simeq \Omega^{[1]}_{X, \Delta, f}$ by \cite[Lemma.~18.2]{CGG24}. Moreover, we have $K_Y = f^* (K_X + \Delta)$ and the cocycle $\left( f^* \frac{\mathrm{d}h_{\alpha\beta}}{h_{\alpha\beta}} \right)$ defined previously correspond to those (up to some exact cocycles) defined in \cite[Sect.~4]{GKP22}, as they are cocycles associated to the divisor $-m K_Y$. Hence, they produce the same class in $H^1(Y, \Omega^{[1]}_{X, \Delta, f})$. It follows that the extensions $\mathcal{E}_{X, \Delta, f}$ and $\mathcal{E}_Y$ are isomorphic.

$(ii)$
We observe that the pullback of the previous cocycles by $\gamma$ are given by $\left(  \gamma^* f^* \frac{\mathrm{d} h_{\alpha\beta}}{h_{\alpha\beta}} \right)$. They provide cocycles that define $\mathcal{E}_{X, \Delta, \gamma \circ f}$. 

Let $U \subseteq Y'_\reg$ be an open subset such that $\gamma$ is \'etale on $U$ and $(f \circ \gamma)^{-1}(\Delta + H) $ (here $H$ denote the additional branching locus of $f$) is a smooth hypersurface on $U$. We can choose $U$ in a way that its complement has codimension at least 2. The explicit description of $\Omega^{[1]}_{X, \Delta, f}$ at general points (see \cite[Rem.~3.8]{CKT16}) and the \'etale property imply that we have an isomorphism $\gamma^* \Omega^{[1]}_{X, \Delta, f} \overset{\sim}{\longrightarrow} \Omega^{[1]}_{X, \Delta, f \circ \gamma}$, which maps the pullback of $\left( f^* \frac{\mathrm{d} h_{\alpha\beta}}{h_{\alpha\beta}} \right)$ on $\left( \gamma^*f^* \frac{\mathrm{d} h_{\alpha\beta}}{h_{\alpha\beta}} \right)$. Hence, both sheaves $\gamma^*\mathcal{E}_{X, \Delta, f \, | U}$ and $\mathcal{E}_{X, \Delta, \gamma \circ f \, |U}$ coincide. It follows that we have the isomorphism $\gamma^{[*]}\mathcal{E}_{X, \Delta, f} \simeq \mathcal{E}_{X, \Delta, \gamma \circ f}$ because both sheaves are reflexive.
\end{proof}

\subsubsection{Canonical extension of a klt pair}

We defined in Example~\ref{deforbifoldextension} a canonical extension when $(X, \Delta)$ is an orbifold, but there is a way to construct $\mathcal{E}_{(X, \Delta)}$ if we assume that it is merely a klt pair with standard coefficients.

We explained in Remark~\ref{firstorbistructurekltpair} that the pair $(X, \Delta)$ can be endowed with an orbi-\'etale orbistructure, we denote it by $\lbrace (U_\alpha, V_\alpha, \eta_\alpha) \rbrace_\alpha$ and use notation of Definition~\ref{deforbistructure}. We set $\mathcal{E}_\alpha := \mathcal{E}_{V_\alpha}$. Because the morphisms $g_{\alpha\beta}$ are quasi-\'etale and as a consequence of \cite[Rem.~4.6]{GKP22}, we have the following isomorphisms:
$$
g_{\alpha\beta}^{[*]} \mathcal{E}_\alpha \simeq \mathcal{E}_{V_{\alpha\beta}} \simeq g_{\beta\alpha}^{[*]}\mathcal{E}_\beta,
$$
that produce an orbisheaf on $(X, \Delta)$.

\newpage

\subsection{Pullback of an orbisheaf by a strictly adapted morphism}

\subsubsection{First construction of the adapted sheaf}

Let $(X, \Delta)$ be a klt pair and let $f \colon Y \longrightarrow X$ be a strictly $\Delta$-adapted morphism. With Remark~\ref{firstorbistructurekltpair}, we can endow $(X, \Delta)$ with an orbi-\'etale orbistructure, and refined it to have a smooth one above $(X, \Delta)_\orb$.

\begin{defi}\label{defadaptedsheaf}
	Let $\mathcal{F}$ be an orbibundle on $(X, \Delta)_\orb$. As $f$ is an orbifold morphism between $(f^{-1}((X, \Delta)_\orb, 0)$ and $(X, \Delta)_\orb$, we can consider the pullback orbibundle $f^* \mathcal{F}$. Let $i \colon f^{-1}((X, \Delta)_\orb) \hookrightarrow Y$ be the canonical inclusion. \emph{The adapted sheaf associated to $\mathcal{F}$}, denoted by $\mathcal{F}_{X, \Delta, f}$ is a sheaf on $Y$ defined by:
    $
    \mathcal{F}_{X, \Delta, f} := i_* ((f^* \mathcal{F})^\inv).
    $
\end{defi}

The pullback orbistructure is orbi-\'etale on $(f^{-1}((X, \Delta)_\orb), 0)$, so that the following isomorphism of orbisheaves holds: $\mathcal{F}_{X, \Delta, f}^{[\orb]} \simeq f^* \mathcal{F}$ as a consequence of Proposition~\ref{factsinvariantsheaves}.

\subsubsection{A refinement of the pullback orbistructure}

In order to describe the pullback of orbibudles, we refine the pullback orbistructure above a big open subset $Y$ in order to get a smooth one, which is well-suited to compute orbifold Chern classes with Chern--Weil theory.\\

\noindent
\emph{Setup.}\label{setup} Let $(X, \Delta)$ be a klt pair, and let $f \colon Y \longrightarrow X$ be a strictly $\Delta$-adapted morphism with $(Y, 0)$ klt. Let us endow $(X, \Delta)_\orb$ with a smooth orbi-\'etale orbistructure. We denote by $\lbrace (S_\alpha, R_\alpha, \nu_\alpha) \rbrace_\alpha$ the pullback orbistructure on $(f^{-1}((X, \Delta)_\orb), 0)$.
Let us justify that the space $R_\alpha$ is klt. Indeed, the morphism $\nu_\alpha \colon R_\alpha \longrightarrow Y$ is finite and satisfies $K_{R_\alpha} = \nu_\alpha^* K_{Y}$ by quasi-\'etaleness. We get that $(R_\alpha, 0)$ is klt as a consequence of \cite[Prop.~5.20]{KM08}.

By \cite[Prop.~9.3]{GKKP11}, there is an open subset $R_{\alpha \; \orb}$ which has only quotient singularities, and its complement has codimension at least 3. Its image by $\nu_\alpha$ is open in $Y$ by open mapping theorem. Hence, the set $Y^\circ = \bigcup_\alpha \nu_\alpha(R_{\alpha \, \orb} )$ is open, and its complement is included in an analytic set of codimension at least 3. By taking smooth orbi-\'etale orbistructures on each $(R_{\alpha \, \orb}, 0)$ and taking Galois closures (see \cite[Cor.~27]{CGG24}) of the composition of a l.u.c. by $\nu_\alpha$, we get that there are smooth spaces $W_{\alpha\alpha'}$ above $R_{\alpha \, \orb}$ such that $\mu_{\alpha\alpha'} \colon W_{\alpha\alpha'} \longrightarrow Y^\circ$ is a Galois quasi-\'etale morphism.

This construction gives an orbistructure on $(Y^\circ, 0)$ in which $f$ becomes an orbifold morphism between $Y^\circ$ and $(X,\Delta)_\orb$ in the sense of Definition~\ref{orbifoldmorphism}. To simplify notation, we will denote by $\mu_{\alpha'} \colon W_{\alpha'} \longrightarrow \mu_{\alpha'}(W_{\alpha'}) =: T_{\alpha'} \subseteq Y^\circ$ a morphism of this refined orbistructure, $W_{\alpha'\beta'}$ the normalization of the fiber product and $l_{\alpha'\beta'} \colon W_{\alpha'\beta'} \longrightarrow W_{\alpha'}$ the corresponding \'etale morphisms.

$$
\xymatrix{
    W_{\alpha\alpha'} \text{ smooth } \ar[d] \ar@/_3pc/[dd]_{\mu_{\alpha\alpha'}} \ar@{.>}[drrr]^{f_{\alpha\alpha'}} &&& \\
    R_{\alpha \; \orb} \ar[d]_{\nu_\alpha} \ar@{^{(}->}[r] & R_\alpha \ar[d]_{\nu_\alpha} \ar[rr]_{f_\alpha} && V_\alpha \text{ smooth } \ar[d]^{\eta_\alpha} \\
    Y^\circ \ar@{^{(}->}[r] & f^{-1}((X, \Delta)_{\orb}) \ar[rr]_f && (X, \Delta)_{\orb}
    }
$$ 

\begin{rema}
	Let $\mathcal{F}$ be an orbibundle defined on $(X, \Delta)_\orb$. The adapted sheaf $\mathcal{F}_{X, \Delta, f}$ can be constructed in the following way: we can pullback $\mathcal{F}$ by $f$ on this refined orbistructure, and taking the sheaf of invariants. It provides a sheaf on $Y^\circ$ that coincides with $\mathcal{F}_{X, \Delta, f}$ on $Y^\circ$. This is a convenient orbistructure, because we deal with genuine orbibundles above orbifolds.
\end{rema}

\subsubsection{Comparison with classical adapted sheaves}

In this section we show the following lemma:

\begin{prop}\label{equivalenceadaptedsheavespullbacks}
    With notation as above, we have the isomorphisms of orbisheaves above $Y^\circ$:
    $$
    f^* \Omega^1_{(X, \Delta)_\orb} \simeq \left( \Omega^{[1]}_{X, \Delta, f} \right)^{[\orb]} 
    \;\;\;\;
    \text{ and }
    \;\;\;\;
    f^* \mathcal{E}_{(X, \Delta)_\orb} \simeq \left( \mathcal{E}_{X, \Delta, f} \right)^{[\orb]}.
    $$    
\end{prop}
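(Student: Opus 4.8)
The plan is to reduce both claimed isomorphisms to identifications of honest reflexive sheaves on $Y$, and then to establish those by a local computation at general points of the relevant divisors. Recall the general fact noted just after Definition~\ref{defadaptedsheaf}: for any orbibundle $\mathcal{F}$ on $(X,\Delta)_\orb$ one has $\mathcal{F}_{X,\Delta,f}^{[\orb]} \simeq f^*\mathcal{F}$. Applying this to $\mathcal{F} = \Omega^1_{(X,\Delta)_\orb}$ and to $\mathcal{F} = \mathcal{E}_{(X,\Delta)_\orb}$, the two claimed isomorphisms become
$$
\big(\Omega^1_{(X,\Delta)_\orb}\big)_{X,\Delta,f}^{[\orb]} \simeq \big(\Omega^{[1]}_{X,\Delta,f}\big)^{[\orb]} \quad\text{and}\quad \big(\mathcal{E}_{(X,\Delta)_\orb}\big)_{X,\Delta,f}^{[\orb]} \simeq \big(\mathcal{E}_{X,\Delta,f}\big)^{[\orb]}.
$$
Since all four sheaves are reflexive and point A of Proposition~\ref{factsinvariantsheaves} gives $\mathcal{A} \simeq (\mathcal{A}^{[\orb]})^\inv$, it suffices to prove the isomorphisms of reflexive sheaves $\big(\Omega^1_{(X,\Delta)_\orb}\big)_{X,\Delta,f} \simeq \Omega^{[1]}_{X,\Delta,f}$ and $\big(\mathcal{E}_{(X,\Delta)_\orb}\big)_{X,\Delta,f} \simeq \mathcal{E}_{X,\Delta,f}$.

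For the cotangent identification, both members are reflexive, so I would produce an isomorphism over a big open subset of $Y^\circ$ by comparing them at general points of each codimension-one stratum. Away from all divisors everything is \'etale and both sheaves restrict to $\Omega^1_{Y}$. At a general point of the reduced divisor $(f^{*}\Delta_i)_{\mathrm{red}}$ the strict adaptedness gives the normal form $z_1 = w_1^{m_i}$, the pullback orbistructure is \'etale there, and both sheaves coincide with $\Omega^1_Y$ (this is the orbi-\'etale case, as in the identification used in Lemma~\ref{orbietextensionproperties}(i)). The new point is at a general point of the additional ramification $H'$ of $f$: here the orbifold structure on $Y^\circ$ is \'etale (the pullback structure only sees $\Delta$), so the invariant sheaf defining $\big(\Omega^1_{(X,\Delta)_\orb}\big)_{X,\Delta,f}$ is simply $f^*\Omega^1_X$, whereas $\Omega^{[1]}_{X,\Delta,f}$ equals $\mathrm{im}(f^*\Omega^1_X \to \Omega^1_Y)$. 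In the normal form $z_1 = w_1^{e}$ the comparison map sends $f^*\mathrm{d}z_1 \mapsto e\,w_1^{e-1}\mathrm{d}w_1$ and is injective as a sheaf morphism (it is generically an isomorphism and $\mathcal{O}_Y$ is a domain), so $f^*\Omega^1_X \overset{\sim}{\to} \mathrm{im}(f^*\Omega^1_X \to \Omega^1_Y)$, which is the desired local isomorphism. Extending by reflexivity yields the global one.

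For the canonical extension I would apply the adapted-sheaf functor to the defining sequence $0 \to \mathcal{O} \to \mathcal{E}_{(X,\Delta)_\orb} \to \mathcal{T}_{(X,\Delta)_\orb} \to 0$. As we work over a field of characteristic zero with finite Galois groups, taking invariants is exact (by averaging), so this functor is exact on orbibundles; combined with the cotangent identification, dualized to $\big(\mathcal{T}_{(X,\Delta)_\orb}\big)_{X,\Delta,f} \simeq \mathcal{T}_{X,\Delta,f}$, it produces a short exact sequence $0 \to \mathcal{O}_{Y^\circ} \to \big(\mathcal{E}_{(X,\Delta)_\orb}\big)_{X,\Delta,f} \to \mathcal{T}_{X,\Delta,f} \to 0$. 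Both this sequence and the one defining $\mathcal{E}_{X,\Delta,f}$ are extensions of $\mathcal{T}_{X,\Delta,f}$ by $\mathcal{O}_Y$, hence classified by $\mathrm{Ext}^1(\mathcal{T}_{X,\Delta,f}, \mathcal{O}_Y) \simeq H^1(Y, \Omega^{[1]}_{X,\Delta,f})$. I would finish by checking that the two extension classes agree: the orbifold class is the logarithmic-derivative cocycle $\mathrm{d}\Phi_{\alpha\beta}/\Phi_{\alpha\beta}$ of the transition functions of $\Omega^n_{(X,\Delta)}$, attached to $K_X+\Delta$, while $\mathcal{E}_{X,\Delta,f}$ is built from $f^*(\mathrm{d}h_{\alpha\beta}/h_{\alpha\beta})$ for a cocycle $(h_{\alpha\beta})$ of $-m(K_X+\Delta)$; since $h_{\alpha\beta}$ and $\Phi_{\alpha\beta}^{-m}$ define the same $\Q$-line bundle, their logarithmic derivatives are proportional and, under the isomorphism of the previous paragraph, map to the same class up to the normalization built into $f^*c_1(X,\Delta)$ and the dualization used to define $\mathcal{E}_{X,\Delta,f}$. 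This matches the extensions, and reflexivity propagates the isomorphism across the complement of the big open set.

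The step I expect to be the main obstacle is the last one: precisely identifying the two extension classes. One must track the logarithmic-derivative cocycle through both the pullback $f^*$ and the invariant functor, and reconcile the orbifold normalization (via $\det\Omega^1_{(X,\Delta)_\orb}$) with the adapted one (via the genuine $\Q$-Cartier divisor $-m(K_X+\Delta)$ on $X$), keeping careful control of the factor $m$ and the sign. The behaviour near the additional ramification $H'$, where the $Y$-side orbifold structure is \'etale but $f$ itself ramifies, is exactly what prevents this from being a formal consequence of the orbi-\'etale case.
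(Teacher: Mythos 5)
Your argument is correct and rests on the same two pillars as the paper's proof --- the normal-form computation at general points of $(f^*\Delta_i)_{\mathrm{red}}$ and of the extra ramification $H'$, identifying both candidate sheaves with the same subsheaf of $\Omega^1$ (generated by $z_1^{N-1}\mathrm{d}z_1,\mathrm{d}z_2,\dots,\mathrm{d}z_n$ over $H'$, and equal to the full $\Omega^1$ over $\Delta$), followed by extension across codimension~$\geq 2$ by reflexivity, and the observation that the two extension cocycles are both pullbacks of the logarithmic derivative of a cocycle representing $\pm m(K_X+\Delta)$ --- but your global packaging is genuinely different. The paper constructs the isomorphism chart by chart on the refined orbistructure $(W_{\alpha'})$ and then verifies compatibility with the transition maps $l_{\alpha'\beta'}$ via an explicit commutative diagram of subsheaves of $\Omega^1_{W_{\alpha'\beta'}}$; you instead invoke points A and B of Proposition~\ref{factsinvariantsheaves} to reduce the orbisheaf statement to an isomorphism of honest reflexive sheaves on $Y$, which absorbs the transition-map check into facts already proved. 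Likewise, for the canonical extension the paper writes down the diagonal isomorphisms $\Theta_{\alpha'}$ and compares cocycles on the charts, whereas you use exactness of the adapted-sheaf functor (invariants under a finite group in characteristic zero) and compare extension classes downstairs; note only that $\mathrm{Ext}^1(\mathcal{T}_{X,\Delta,f},\mathcal{O}_Y)$ and $H^1(Y,\Omega^{[1]}_{X,\Delta,f})$ agree only on locally split extensions, which suffices here since both extensions are given by \v{C}ech cocycles. Finally, the step you flag as the main obstacle is not one: the orbifold class built from $\mathrm{d}\Phi_{\alpha\beta}/\Phi_{\alpha\beta}$ and the adapted class built from $f^*(\mathrm{d}h_{\alpha\beta}/h_{\alpha\beta})$ differ by the nonzero factor $-m$ up to an exact cocycle, and proportional extension classes yield isomorphic middle sheaves; this is exactly the point the paper settles with its ``up to an exact cocycle'' remark.
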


\begin{proof}
    \noindent
    \underline{$f^* \Omega^1_{(X, \Delta)_\orb} \simeq \left( \Omega^{[1]}_{X, \Delta, f} \right)^{[\orb]} $ :}
    It suffices to consruct an isomorphism of orbisheaves above an open subset of $Y^\circ$ whose complement is of codimension at least 2. Indeed, it will yield an isomorphism of sheaves, which can be extended by reflexivity of both sheaves. Then, it suffices to construct this isomorphism on l.u.c. above $Y_\reg$ and general points of ramification divisor.
    
    The strategy to construct such isomorphisms is to identify them as subsheaves of $\Omega^1_{W_{\alpha'}}$, and verify that these isomorphisms are compatible with transition maps.

    First, we describe such a local isomorphism for general points above $H$. We use notation of \S~\ref{setup} and those of Definition~\ref{orbifoldmorphism} and Remark~\ref{remarkoforbifoldmorphism} for the orbifold morphism $f$. We can write in normal forms:
    $$
    \xymatrix{
    \lbrace (z_1,w_1, \underline{z}) \;\;|\;\; z_1^N = w_1 \rbrace \subseteq W_{\alpha'} \ar@{|->}[dd]_{\mu_{\alpha'}}^\sim \ar@{|->}[rr]^{f_{\alpha\alpha'}}&& w \in V_\alpha \ar@{=}[dd]^{\eta_\alpha} \\
    \\
     z = (z_1, \underline{z}) \in Y \ar@{|->}[rr]_{f} &&  (z_1^N, \underline{z}) = w \in H
    }
    $$
    We have two inclusions $f_{\alpha\alpha'}^* \Omega^1_{V_\alpha} \hookrightarrow \Omega^1_{W_{\alpha'}}$ and $\mu_{\alpha'}^* \Omega^{[1]}_{X, \Delta, f} \hookrightarrow \Omega^1_{W_{\alpha'}}$ These inclusions have the same image: it is a locally free sheaf generated by $(z_1^{N-1} \mathrm{d}z_1, \mathrm{d}z_2, \ldots, \mathrm{d}z_n)$.
    
    Secondly, above a general point of $\Delta$, the morphisms $f_{\alpha\alpha'}$ and $\mu_{\alpha'}$ are \'etale, and we have naturally an isomorphism $f_{\alpha\alpha'}^* \Omega^1_{V_\alpha} \simeq \Omega^1_{W_{\alpha'}} \simeq \mu_{\alpha'}^* \Omega^{[1]}_{X, \Delta, f}$.\\
    Now, on more complicated l.u.c. $V_\alpha$ and $W_{\alpha'}$, the previous cases corresponds to normal forms of the morphisms in codimension one. Hence, we have an isomorphism on a big open subset of $W_{\alpha'}$, that extends as $\mu_{\alpha'}^{[*]} \Omega^{[1]}_{X, \Delta, f}$ and $f_{\alpha\alpha'}^* \Omega^1_{V_\alpha}$ are reflexive on $W_{\alpha'}$. Hence, we have a collection of isomorphisms $\theta_{\alpha'} \colon \mu^{[*]}_{\alpha'} \Omega^{[1]}_{X, \Delta, f} \overset{\sim}{\longrightarrow} f^*_{\alpha\alpha'} \Omega^1_{V_\alpha}$.\footnote{The morphism $\theta_{\alpha'}$ is totally determined by the index $\alpha'$, even it seems to depend on an extra index $\alpha$. Indeed, if $A$ is the set of indices $\alpha$ parametrizing the orbistructure on $(X, \Delta)_\orb$, the set of indices of the orbistructure on $(Y^\circ, 0)$ is of the form $A' := \bigsqcup_{\alpha \in A} B_\alpha$, where $B_\alpha$ is the set of indices of the smooth orbistructure on $R_{\alpha \, \orb}$. It follows that there is a map:
    $$
    \iota \colon \alpha' \in A' \longmapsto  \left( \alpha\textrm{ such that } \alpha' \in B_\alpha \right) \in A. $$ The pullback $f_{\alpha\alpha'} \Omega^{1}_{V_\alpha}$ is in fact equals to $f_{\iota(\alpha')\alpha'} \Omega^{1}_{V_\iota(\alpha')}$.}
    Now, let us check the compatibility on intersections $W_{\alpha'\beta'}$. As previously, we can show the compatibility by identifying subsheaves of $\Omega^1_{W_{\alpha'\beta'}}$ on big open subsets of $W_{\alpha'\beta'}$. Let us recall that we denote by $l_{\alpha'\beta'} \colon W_{\alpha'\beta'} \longrightarrow W_{\alpha'}$ the morphisms of the orbistructure on $Y^\circ$ and $f_{\alpha\beta, \alpha'\beta'} \colon W_{\alpha'\beta'} \longrightarrow V_{\alpha\beta}$ defined in Remark~\ref{remarkoforbifoldmorphism}. We have the following diagram of sheaves on $W_{\alpha'\beta'}$: 
    $$
    \xymatrix{
    l_{\alpha'\beta'}^*f_{\alpha\alpha'}^* \Omega^1_{V_{\alpha}} \ar@{^{(}->}[rd] \ar[rr] \ar@/^3pc/[rrrr]^{f_{\alpha\beta,\alpha'\beta'}^*p_{\beta\alpha}}_{\sim} && f_{\alpha\beta,\alpha'\beta'}^* \Omega^1_{V_{\alpha\beta}} \ar[rr] \ar@{^{(}->}[d] && l_{\beta'\alpha'}^* f_{\beta\beta'}^* \Omega^1_{V_\beta} \ar@{^{(}->}[ld]  \\
     & l_{\alpha'\beta'}^* \Omega^1_{W_{\alpha'}} \ar@{=}[r] & \Omega^1_{W_{\alpha'\beta'}} \ar@{=}[r] & l_{\beta'\alpha'}^* \Omega^1_{W_{\beta'}} & \\
    l_{\alpha'\beta'}^*\mu_{\alpha'}^{[*]} \Omega^{[1]}_{X, \Delta, f} \ar[uu]_{l_{\alpha'\beta'}^* \theta_{\alpha'}} \ar@{^{(}->}[ur] \ar[rrrr]_\sim^{\mathrm{nat}_{\beta'\alpha'}} &&&& l_{\beta'\alpha'}^*\mu_{\beta'}^{[*]} \Omega^{[1]}_{X, \Delta, f} \ar[uu]_{l_{\beta'\alpha'}^* \theta_{\beta'}} \ar@{^{(}->}[lu]
    }
    $$
    By compatibility of the pullback of forms with composition, the morphisms in the previous diagram commute, and we get the compatibility condition.\\

    \noindent
    \underline{$f^* \mathcal{E}_{(X, \Delta)_\orb} \simeq \left( \mathcal{E}_{X, \Delta, f} \right)^{[\orb]}$ :}   
    We want to show that there exists a collection of isomorphisms:
    $$
    \Theta_{\alpha'} \colon \mathcal{O}_{W_{\alpha'}} \oplus f_{\alpha\alpha'}^* \Omega^1_{V_\alpha} \overset{\sim}{\longrightarrow} \mathcal{O}_{T_{\alpha'}} \oplus \mu_{\alpha'}^* \Omega^{[1]}_{X, \Delta, f} 
    $$
    defined on the refined orbisructure above $Y^\circ$, that are compatible with transition maps.
    We can define $\Theta_{\alpha'} :=
    \left[
    \begin{array}{c|ccc}
        1 && 0 & \\
        \hline
        &&& \\
        0 && \theta_{\alpha'} & \\ 
        &&& 
    \end{array}
    \right]$.

    It remains to show that these $\Theta_{\alpha'}$ are compatible with transition maps. Both orbisheaves are given by collections of 1-forms $(\omega_{\alpha'\beta'})$ and $(\xi_{\alpha'\beta'})$ on each $W_{\alpha'\beta'}$. Up to an exact cocycle, they are equal as they can be obtained as pullbacks on $W_{\alpha'\beta'}$ of the same collection $\left( \frac{\mathrm{d} h_{\alpha\beta}}{h_{\alpha\beta}} \right)$, where $(h_{\alpha\beta})$ are the transition maps defining the line bundle $-m(K_X + \Delta)$ on $(X, \Delta)$ for $m \gg 1$.
\end{proof}

\begin{rema}
    We use notation of \S~\ref{setup}. On $Y^\circ$, if $\mathcal{F} = \Omega^1_{(X, \Delta)_\orb}$ (resp. $ \mathcal{F} = \mathcal{E}_{(X, \Delta)_\orb}$), then we have the isomorphism for the adapted sheaf $\mathcal{F}_{X, \Delta, f} = \Omega^{[1]}_{X, \Delta, f}$ (resp. $\mathcal{F}_{X, \Delta, f} = \mathcal{E}_{X, \Delta, f}$) as an application of the previous lemma, and using functoriality of $(.)^\textrm{inv}$ detailed in Definition~\ref{sheafofinvariants} and the point A of Proposition~\ref{factsinvariantsheaves}.
    As these classical adapted sheaves are algebraic coherent on $Y$, we infer by \cite[Thm.~2]{Ser66} that $\mathcal{F}_{X, \Delta, f}$ are coherent, reflexive sheaves on $Y$, and isomorphic to $\Omega^{[1]}_{X, \Delta, f}$ (resp. $\mathcal{E}_{X, \Delta, f}$) on the whole $Y$.   
\end{rema}

\begin{rema}\label{remarkpullbackcomposition}
    Let us consider a finite morphism $\gamma: Y' \longrightarrow Y$ such that the composition $f \circ \gamma$ is strictly $\Delta$-adapted. We use notation of Definition~\ref{pullbackstructure}.
    As the orbistructures $\left( \mathcal{C}_f \right)_\gamma$ and $\mathcal{C}_{f \circ \gamma}$ coincide, we have the natural isomorphism on this orbistructure $\gamma^* f^* \mathcal{F} = (f \circ \gamma)^* \mathcal{F}$. 

    Now, let us justify the isomorphism: $\mathcal{F}_{X, \Delta, f \circ \gamma} \simeq \gamma^{[*]} \mathcal{F}_{X, \Delta, f}$.
    One can observe that $\gamma^* (\mathcal{F}_{X, \Delta, f}^\orb) \simeq (\gamma^* \mathcal{F}_{X, \Delta, f})^{\orb}$ by pulling back sheaves on l.u.c. above a big open subset $Y'^\circ$ in two different ways. Because $\gamma$ and morphisms of the local uniformizing charts are quasi-flat, these local sheaves are moreover reflexive away from an analytic subset of codimension at least 2. It follows that the reflexive hulls $(\gamma^{[*]} \mathcal{F}_{X, \Delta, f})^{[\orb]}$ and $\gamma^{[*]} (\mathcal{F}_{X, \Delta, f}^{[\orb]})$ are isomorphic in codimension 1, hence everywhere. Moreover, these isomorphisms are compatible with transition morphisms on a big open subset, hence everywhere by reflexivity. To sum up, we get an isomorphism $\gamma^{[*]}\left( \mathcal{F}_{X, \Delta, f}^{[\orb]} \right) \simeq \left( \gamma^{[*]} \mathcal{F}_{X, \Delta, f} \right)^{[\orb]} $.
    It implies by taking invariant sections and using the fact that: $\gamma^{[*]}\left( \mathcal{F}_{X, \Delta, f}^{[\orb]} \right) \simeq \gamma^* f^* \mathcal{F}$ that $\mathcal{F}_{X, \Delta, f \circ \gamma} \simeq \gamma^{[*]} \mathcal{F}_{X, \Delta, f}$.
\end{rema}

Now, we can show that assumption $(i)$ in the main theorem implies that the semistability property transmits to any strictly $\Delta$-adapted morphism.

\begin{prop}\label{propagationsemistabilityforanyadap}
    Let $(X, \Delta)$ be a log Fano pair such that the assumption $(i)$ in the main theorem is satisfied. Then, for any strictly $\Delta$-adapted morphism $f \colon Y \longrightarrow X$, the adapted canonical extension $\mathcal{E}_{X, \Delta, f}$ is semistable with respect to $f^*(-(K_X + \Delta))$.
\end{prop}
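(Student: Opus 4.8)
The plan is to reduce the statement to the fact that slope-semistability with respect to an ample class is invariant under finite covers, and to connect $\mathcal{E}_{X, \Delta, f}$ to the given $\mathcal{E}_{X, \Delta, f_0}$ through a single strictly $\Delta$-adapted morphism that dominates both $f$ and $f_0$.

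First I would build a common cover. Since $f_0 \colon Y_0 \longrightarrow X$ and $f \colon Y \longrightarrow X$ are strictly $\Delta$-adapted, they are in particular Galois. Let $g \colon \widehat{Z} \longrightarrow X$ be a Galois morphism dominating both, for instance the Galois closure over $X$ of the normalized fiber product $Y_0 \times_X Y$, so that $g = f_0 \circ \delta_0 = f \circ \delta$ with $\delta_0 \colon \widehat{Z} \longrightarrow Y_0$ and $\delta \colon \widehat{Z} \longrightarrow Y$ finite. The key point is that $g$ is again strictly $\Delta$-adapted: over the generic point of a component $\Delta_i$ both $f_0$ and $f$ have ramification index exactly $m_i$, so in normal form the two cyclic monodromies around $\Delta_i$ coincide (each being a generator of $\mathbb{Z}/m_i$); hence the local monodromy of $g$ around $\Delta_i$ is again cyclic of order $m_i$ and $g^* \Delta_i = m_i \, \Delta_i''$ for a reduced divisor $\Delta_i''$. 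The additional ramification of $g$ away from $|\Delta|$ is irrelevant to the strictly $\Delta$-adapted condition. Existence of such Galois closures is \cite[Cor.~27]{CGG24}.

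Next I would identify the three adapted canonical extensions. The sheaf $\mathcal{E}_{X, \Delta, \bullet}$ is the adapted sheaf attached to the orbibundle $\mathcal{E}_{(X, \Delta)_\orb}$ (Remark following Proposition~\ref{equivalenceadaptedsheavespullbacks}); applying Remark~\ref{remarkpullbackcomposition} to the finite morphisms $\delta_0$ and $\delta$, whose compositions with $f_0$ and $f$ both equal the strictly $\Delta$-adapted morphism $g$, yields $\mathcal{E}_{X, \Delta, g} \simeq \delta_0^{[*]} \mathcal{E}_{X, \Delta, f_0}$ and $\mathcal{E}_{X, \Delta, g} \simeq \delta^{[*]} \mathcal{E}_{X, \Delta, f}$. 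Finally I would run the semistability argument. The class $-(K_X + \Delta)$ is ample, hence so are its finite pullbacks, and $g^*(-(K_X + \Delta)) = \delta_0^* f_0^*(-(K_X + \Delta)) = \delta^* f^*(-(K_X + \Delta))$. I would then invoke the standard principle that, for a finite surjective morphism $h$ of normal projective varieties and an ample class $A$ downstairs, a reflexive sheaf $\mathcal{G}$ is semistable with respect to $A$ if and only if $h^{[*]} \mathcal{G}$ is semistable with respect to $h^* A$: both implications follow from the fact that $h^{[*]}$ scales slopes uniformly by $\deg h > 0$, together with the uniqueness (hence Galois-invariance, hence descent) of the maximal destabilizing subsheaf after passing to a Galois closure. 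Applying the ``if'' direction to $\delta_0$ turns the semistability of $\mathcal{E}_{X, \Delta, f_0}$ granted by assumption $(i)$ into semistability of $\mathcal{E}_{X, \Delta, g}$ with respect to $g^*(-(K_X + \Delta))$; applying the ``only if'' direction to $\delta$ then transfers it down to semistability of $\mathcal{E}_{X, \Delta, f}$ with respect to $f^*(-(K_X + \Delta))$, as desired.

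The main obstacle I expect is twofold: checking that the common cover $g$ is genuinely strictly $\Delta$-adapted, i.e. that neither the fiber product nor the Galois closure inflates the ramification order above $m_i$ along $\Delta_i$, which is exactly where strictness of $f_0$ and $f$ enters; and pinning down the finite-cover invariance of semistability in the reflexive setting, where one must ensure that reflexive pullback commutes with the determinant in codimension one so that slopes scale correctly, and that the descent of the maximal destabilizing subsheaf is valid for the pulled-back polarization.
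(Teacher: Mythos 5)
Your proposal is correct and follows essentially the same route as the paper: pass to a common cover obtained from the normalized fiber product $Y_0 \times_X Y$, identify the two adapted canonical extensions there via Remark~\ref{remarkpullbackcomposition}, and transfer semistability using the easy pullback direction one way and Galois descent of the maximal destabilizing subsheaf (the polarization being Galois-invariant) the other way. The paper phrases this as a proof by contradiction rather than as a two-way equivalence, and works directly with the projections of the normalized fiber product instead of first passing to a Galois closure over $X$, but the content is the same.
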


\begin{proof}
    Let $f_0 \colon Y_0 \longrightarrow X$ be the strictly $\Delta$-adapted morphism of the assumption $(i)$ of our main theorem, and $Z$ the normalization of $Y_0 \underset{X}{\times} Y$. Let $F \colon Z \longrightarrow Y$, $F_0 \colon Z \longrightarrow Y_0$ be the natural projections so that $f \circ F = f_0 \circ F_0$.
    First, we have an isomorphism of sheaves on $Z$: $F^{[*]} \mathcal{E}_{X, \Delta, f} \simeq F_0^{[*]} \mathcal{E}_{X, \Delta, f_0}$. Indeed, one can observe with Remark~\ref{remarkpullbackcomposition} and Proposition~\ref{equivalenceadaptedsheavespullbacks}:
    $$
    F^{[*]} \mathcal{E}_{X, \Delta, f} \simeq \mathcal{E}_{X, \Delta, f \circ F} \simeq \mathcal{E}_{X, \Delta, f_0 \circ F_0} \simeq F_0^{[*]} \mathcal{E}_{X, \Delta, f_0}.
    $$

    Now, we argue by contradiction, following the proof of \cite[Prop.~3.6]{GT22}. Assume that $\mathcal{E}_{X, \Delta, f}$ is not $f^* (-(K_X + \Delta))$-semistable. Hence, using arguments of \cite[Fact~5.1.3]{GKP22}, the sheaf $F^{[*]} \mathcal{E}_{X, \Delta, f}$ is not $F^* f^* (-(K_X + \Delta))$-semistable. Hence, the previous isomorphism implies that $F_0^{[*]} \mathcal{E}_{X, \Delta, f_0}$ is not $F_0^* f_0^* (-(K_X + \Delta))$-semistable. The polarization is invariant under the action of $\mathrm{Gal}(F_0)$, hence the maximal destabilizing subsheaf, denoted $\mathcal{F}$, is $\mathrm{Gal}(F_0)$-invariant. By \cite[Thm.~4.2.15]{HL10}, it follows that there exists a subsheaf $\mathcal{G}$ of $\mathcal{E}_{X, \Delta, f_0}$ such that $\mathcal{F} = f_0^* \mathcal{G}$ that destabilizes $\mathcal{E}_{X, \Delta, f_0}$, contradicting its semistability.
\end{proof}

\subsubsection{Computation of orbifold Chern classes of $\mathcal{F}_{X, \Delta, f}$}

We can compute klt Chern classes of $\mathcal{F}_{X, \Delta, f}$. Let us first recall what are Chern classes for klt pairs.

\begin{defi}
    Let $(X, \Delta)$ be a klt pair, and $\mathcal{F}$ an orbibundle on an open subset $U \subseteq (X, \Delta)_\orb$, whose complement is included in an analytic subset of codimension at least 3.\\
    Then, for $i = 1,2$, we have an isomorphism $\iota \colon H^{2n-2i}_{\textrm{dR}, \orb, c}(U, \C) \simeq H^{2n-2i}(X, \C)$ (see \cite[Section.~5]{GK20}) that provides a linear form $\widehat{c_i}(\mathcal{F}) \in H^{2n-2i}(X, \C)^\vee$ defined by:
    $$
    \widehat{c_i}(\mathcal{F}) \cdot \kappa := \int_U \Omega \wedge A,
    $$
    where $\kappa \in H^{2n-2i}(X, \C)$, $A$ is an orbifold $(2n-2i)$-form with compact support in $U$ which represents the class $\iota^{-1}(\kappa)$, and $\Omega$ is an orbifold $(2i)$-form representing the orbifold Chern class $c_i^\orb(\mathcal{F})$.
    
    The intersection $c_1^\orb (\mathcal{F})^2$ yields similarly a class $\widehat{c_1}^2(\mathcal{F}) \in H^{2n-4}(X, \C)^\vee$.
    We denote $c_i(X, \Delta) = \widehat{c_i}(\mathcal{T}_{(X, \Delta)_\orb})$.
\end{defi}

As $K_X + \Delta$ is $\Q$-Cartier, we can compute the class:
$$ c_1(-(K_X + \Delta)) := \frac{1}{m}c_1(-m(K_X + \Delta)) \in H^2(X, \C),$$ for $m \gg 1$. It is independent of $m$. This class yields by intersection pairing an element of $H^{2n-2}(X, \C)^\vee$ which coincides with $c_1(X, \Delta)$. This definition allows us to consider $c_1(X, \Delta)^2 \in H^{2n-4}(X, \C)^\vee$, as the intersection with $c_1(-(K_X + \Delta))^2$.

\begin{prop}\label{calculchernextension}
    With notation as above, we have:
    $$
    \widehat{c_1}(\mathcal{F}_{X, \Delta, f}) \cdot f^* \kappa = (\deg f) \widehat{c_1}(\mathcal{F}) \cdot \kappa,
    $$
    $$
    \widehat{c_1}^2(\mathcal{F}_{X, \Delta, f}) \cdot f^* \kappa = (\deg f) \widehat{c_1}^2(\mathcal{F}) \cdot \kappa,
    $$
    $$
    \widehat{c_2}(\mathcal{F}_{X, \Delta, f}) \cdot f^* \kappa = (\deg f) \widehat{c_2}(\mathcal{F}) \cdot \kappa,
    $$
\end{prop}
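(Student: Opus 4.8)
The plan is to evaluate both sides through the integral definition of the linear forms $\widehat{c_i}$, transporting every representative from $X$ to $Y$ by pullback along $f$, so that the factor $\deg f$ emerges from the degree formula for a finite map. The starting point is the isomorphism of orbibundles $\mathcal{F}_{X, \Delta, f}^{[\orb]} \simeq f^* \mathcal{F}$ over the refined orbistructure on $Y^\circ$ of \S\ref{setup}, together with the fact that $(Y,0)$ is klt, so that $\widehat{c_i}(\mathcal{F}_{X, \Delta, f})$ is defined by orbifold integration over $(Y,0)_\orb$. Fixing a hermitian metric on $\mathcal{F}$ and pulling it back through the lifts $f_{\alpha\alpha'}$ equips $f^*\mathcal{F}$, hence $\mathcal{F}_{X, \Delta, f}$, with a metric whose curvature is the $f$-pullback of the curvature on $\mathcal{F}$. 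Since the orbifold Chern forms are invariant polynomials in the curvature, the representative $\Omega$ of $c_i^\orb(\mathcal{F})$ pulls back to a representative $f^*\Omega$ of $c_i^\orb(\mathcal{F}_{X, \Delta, f})$ on $Y^\circ$ (and $f^*(\Omega \wedge \Omega)$ handles the $\widehat{c_1}^2$ case, as pullback commutes with the wedge product).

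Next I would build the cohomological representative on the $Y$ side. If $A$ is an orbifold $(2n-2i)$-form with compact support in $U$ representing $\iota^{-1}(\kappa)$, I claim that $f^*A$ represents $\iota_Y^{-1}(f^*\kappa)$, where $\iota_Y$ is the comparison isomorphism on $Y$. Granting this, the definition gives
$$\widehat{c_i}(\mathcal{F}_{X, \Delta, f}) \cdot f^*\kappa = \int_{(Y,0)_\orb} f^*\Omega \wedge f^* A = \int f^*(\Omega \wedge A).$$
To evaluate the right-hand integral I would discard every positive-codimension stratum: over the dense open $X^\circ \subseteq X^* = X_\reg \setminus |\Delta|$ obtained by removing the image of the ramification locus of $f$, the morphism $f$ is finite étale of degree $\deg f$ and both orbifold structures are trivial, so orbifold integration reduces to ordinary integration of the descended forms and $f^{-1}(X^\circ)$ is smooth. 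As the removed loci are measure zero for a top-dimensional form, summing over the $\deg f$ sheets of $f$ yields
$$\int f^*(\Omega \wedge A) = (\deg f) \int_{X^\circ} \Omega \wedge A = (\deg f)\int_U \Omega \wedge A = (\deg f)\, \widehat{c_i}(\mathcal{F}) \cdot \kappa.$$
The same argument applies verbatim to all three identities, the only change being the degree of the chosen Chern--Weil representative.

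The main obstacle is the claim that $f^*A$ represents $\iota_Y^{-1}(f^*\kappa)$, that is, the compatibility of the comparison isomorphisms $\iota$ and $\iota_Y$ of \cite{GK20} with pullback along $f$. I would establish this from naturality: $f$ induces compatible pullbacks on the compactly supported orbifold de Rham cohomology of the two orbifold loci and on $H^\bullet(X,\C) \to H^\bullet(Y,\C)$, and since $\iota$, $\iota_Y$ are constructed functorially by restriction to the orbifold locus, they intertwine these pullbacks. A secondary point requiring care is the dimensional bookkeeping needed for $\widehat{c_i}(\mathcal{F}_{X, \Delta, f})$ to be defined at all: one checks that the preimage of the codimension $\geq 3$ complement of $U$, intersected with $(Y,0)_\orb$, still has codimension $\geq 3$ in $Y$, which is immediate because $f$ is finite and $(Y,0)$ is klt. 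Once these are in place the degree factor is forced, and is the same in all three formulas.
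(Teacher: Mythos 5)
Your proposal is correct and follows essentially the same route as the paper: pull back a Hermitian metric on $\mathcal{F}$ to get Chern--Weil representatives for $\mathcal{F}_{X,\Delta,f}$ on $Y^\circ$, choose the test form $A$ with compact support in $f(Y^\circ)$, and reduce the integral to the locus $X_\reg\setminus\mathrm{Br}(f)$ over which $f$ is finite \'etale of degree $\deg f$, so the degree formula produces the factor $\deg f$. The only difference is that you spell out the naturality of the comparison isomorphisms of \cite{GK20} under $f^*$, a point the paper's proof leaves implicit.
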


\begin{proof}
    We show the equality for $\widehat{c_2}$, the other ones are obtained similarly.\\
    Let $h$ be a Hermitian metric on $\mathcal{F}$. There is a pullback orbifold hermitian metric $f^*h = (f_\alpha^* h_\alpha)$ on $f^*\mathcal{F}$. Let $A$ be a $(2n-4)$-orbifold form which represents $\kappa$ with compact support in $f(Y^\circ)$. The previous constructions show that we can integrate $\widehat{c_1}(\mathcal{F}_{X, \Delta, f}) \cdot f^* \kappa$ on $Y^\circ$. By removing subsets of measure zero, we get:
    \begin{align*}
     \widehat{c_2}(\mathcal{F}_{X, \Delta, f}) \cdot f^* \kappa 
     & = \int_{Y^{**}} c_2^{\orb}(\mathcal{F}_{X, \Delta, f} , f^* h)_{| Y^{**}} \wedge f^* A \\
     & = (\deg f) \int_{X^{**}}  c_2^{\orb}(\mathcal{F} , h)_{| X^{**}} \wedge A = (\deg f)\; \widehat{c_2}(\mathcal{F}) \cdot \kappa.
    \end{align*}
    Here $X^{**} := X_\reg \backslash \textrm{Br}(f) $, where $\textrm{Br}(f)$ is the branching locus of $f$, and $Y^{**} = f^{-1}(X^{**})$. We used the fact the morphism $f_{| Y^{**}}$ is \'etale, finite of degree $\deg f$.
\end{proof}

A consequence of this computation is:
$$
\widehat{c_1}(\mathcal{E}_{X, \Delta, f}) \cdot f^* \kappa = \widehat{c_1}(\mathcal{T}_{X, \Delta, f}) \cdot f^* \kappa = (\deg f) c_1(X, \Delta) \cdot \kappa,
$$
$$
\widehat{c_1}^2(\mathcal{E}_{X, \Delta, f}) \cdot f^* \kappa = \widehat{c_1}^2(\mathcal{T}_{X, \Delta, f}) \cdot f^* \kappa = (\deg f) c_1^2(X, \Delta) \cdot \kappa,
$$
$$
\widehat{c_2}(\mathcal{E}_{X, \Delta, f}) \cdot f^* \kappa = \widehat{c_2}(\mathcal{T}_{X, \Delta, f}) \cdot f^* \kappa = (\deg f) c_2(X, \Delta) \cdot \kappa.
$$

\section{Proof of the main Theorem}

In this section we prove the main theorem. Let $(X, \Delta)$ be a log Fano pair and we denote the following assertions:\\
    $(\mathrm{Quo})$: \emph{There exists a finite subgroup $G$ of $\PGL(n+1, \C)$ such that $(X, \Delta) \simeq (\PP^n / G, \Delta_G)$},\\
    $(\mathrm{Unif})$: \emph{The pair $(X, \Delta)$ satisfies assumptions $(i)$ and $(ii)$}.

\subsection{\texorpdfstring{$(\mathrm{Quo})$}{(Quo)} implies \texorpdfstring{$(\mathrm{Unif})$}{(Unif)}}

Let $G$ be a finite subgroup of $\PGL(n+1, \C)$ such that $(X, \Delta)$ is isomorphic to $(\PP^n / G, \Delta_G)$.
We get that the quotient map $ \pi_G \colon \PP^n \longrightarrow \mathcal{P}_G$ is orbi-\'etale, and we have:
$$
\pi_G^* \Omega^1_{\mathcal{P}_G} = \Omega^1_{\PP^n}
\;\;\;\;
\text{ and }
\;\;\;\;
\pi_G^* \mathcal{E}_{\mathcal{P}_G} = \mathcal{E}_{\PP^n},
$$
as a consequence of Proposition~\ref{equivalenceadaptedsheavespullbacks}, and an application of \cite[Lemma.~18]{CGG24} and Lemma~\ref{orbietextensionproperties}. In particular, we have $\mathcal{E}_{\PP^n/G, \Delta_G, \pi_G} = \mathcal{E}_{\PP^n}$ which is semistable with respect to $\pi_G^* c_1(\mathcal{P}_G) = c_1(\PP^n)$ (see \cite[Thm.~0.1]{Tia92}, with the fact that the Fubini--Study metric $\omega_{FS}$ is K\"ahler--Einstein). Hence, we get that assumption $(i)$ is satisfied.

Moreover, we have:
\begin{align*}
  ( 2(n+1) & c_2^\orb(X, \Delta) - n c_1^\orb(X, \Delta)^2 )  \cdot c_1^\orb(X, \Delta)^{n-2} \\
  & = \frac{1}{|G|}  \left( 2(n+1)c_2^\orb(\PP^n) - n c_1^\orb(\PP^n)^2 \right) \cdot c_1^\orb(\PP^n)^{n-2} = 0.
\end{align*}

Hence, the pair satisfies Miyaoka--Yau equality $(ii)$.

\begin{rema}
Let us show that the group $G$ can be recovered from $(X, \Delta)$, as $G = \pi_1^\orb(X, \Delta)$. 
In the following we denote by $\textrm{Fix}\, G = \left\lbrace p \in \PP^n \,\,|\,\, \exists g \in G \backslash \lbrace 1 \rbrace, g \cdot x = x \right\rbrace$.
If $\Delta_G = 0$, it is a consequence of \cite[Lemma.~7.8]{GKP22}.

If $\Delta_G \neq 0$, the morphism $\pi_G$ is \'etale on $\PP^n \backslash \textrm{Fix}\, G$. It follows that we have a short exact sequence:
$$
1 \longrightarrow \pi_1(\PP^n \backslash \textrm{Fix}\, G) \overset{{\pi_G}_*}{\longrightarrow} \pi_1(\mathcal{P}_G^* ) \longrightarrow G \longrightarrow 1
$$
Let us describe the injection. The space $\PP^n \backslash \textrm{Fix}\, G$ is topologically homeomorphic to $\C^n \backslash \bigcup V_i $ where $V_i$ are affine subspaces associated to $\Delta_i$. This space is generated by meridian loops $\delta_i$ around $V_i$'s of codimension 1. Now, the image of the loop $\delta_i$ around $V_i$ is $\gamma_i^{m_i}$ by ${\pi_G}_*$, as a consequence of local forms of the projection around general points of divisors. Hence, we have: $G \simeq \pi_1(\mathcal{P}_G^*) / \langle\langle \gamma_i^{m_i} \rangle\rangle = \pi_1^\orb(\mathcal{P}_G)$.
\end{rema}

\subsection{\texorpdfstring{$(\mathrm{Unif})$}{(Unif)} implies \texorpdfstring{$(\mathrm{Quo})$}{(Quo)}}
Let $(X, \Delta)$ be a log Fano pair satisfying assumptions $(i)$ and $(ii)$ of the main theorem.
The proof is divided in two steps.\\

\noindent
\textbf{Step 1: $(X, \Delta)$ is an orbifold.}\\
Proposition~\ref{elimdelta} states that there exists a very ample line bundle $L$ which produces many orbi-\'etale morphisms above an open set of $X$. Let $x \in X$, we pick a divisor $H \in |L|$ such that $x \notin H$ (it is possible since $L$ is basepoint-free). A consequence of this proposition is the existence of a strictly $\Delta$-adapted morphism $f \colon Y \longrightarrow X$ that is orbi-\'etale for the pair $\displaystyle \left( X, \Delta + \left( 1 - \frac{1}{N} \right) H \right)$ for some integer $N \gg 1$. We can assume that $Y$ is maximally quasi-\'etale, since we can compose by a maximally quasi-\'etale cover \cite[Facts 5.1.1-3]{GKP22} and take the Galois closure of the morphism.\cite[Cor.~27]{CGG24}\footnote{We can replace $Y \longrightarrow X$ by the composition $Y' \longrightarrow Y \longrightarrow X$ where $Y' \longrightarrow Y$ is the maximally quasi-\'etale covering of $Y$. It follows that the composition is branched exactly along $\Delta + \left( 1 - \frac{1}{N} \right) H$. We can take the Galois closure $Y'' \longrightarrow Y' \longrightarrow X$, and this map is orbi-\'etale for this divisor, hence strictly $\Delta$-adapted.}. We show that $Y \backslash f^{-1}(H)$ is smooth.

By Proposition~\ref{calculchernextension}, and assumption $(ii)$ in the main theorem, we have:
\begin{align*}
& \left( 2(n+1) \widehat{c_2}(\mathcal{E}_{X, \Delta, f}) - n\widehat{c_1}(\mathcal{E}_{X, \Delta, f})^2 \right) \cdot \left[ f^*(-(K_X + \Delta)) \right]^{n-2} \\
& = (\deg f) (2(n+1)c_2(X, \Delta) - nc_1(X, \Delta)^2) \cdot c_1(X, \Delta)^{n-2} \\
& = 0.
\end{align*}

The morphism $f$ being strictly $\Delta$-adapted, the extension $\mathcal{E}_{X, \Delta, f}$ is semistable with respect to $f^*(-(K_X + \Delta))$ as a consequence of Proposition~\ref{propagationsemistabilityforanyadap}. Its rank is $n+1$ and it satisfies the Bogomolov--Gieseker equality. By \cite[Prop. 1.6]{GKP22}, the extension ${\mathcal{E}_{X, \Delta, f}}_{|Y_{\reg}}$ is locally free and projectively flat. So, $\End(\mathcal{E}_{X, \Delta, f})$ is locally free and flat \cite[Cor. 3.6]{GKP22}. Now, the extension $\mathcal{E}_{X, \Delta, f}$ splits locally, we have on sufficiently small open subsets $U \subseteq Y \backslash f^{-1}(H)$:
$$
\End(\mathcal{E}_{X, \Delta, f})_{| U} = \mathcal{O}_U \oplus \mathcal{T}_{X, \Delta, f |U} \oplus \Omega^{[1]}_{X, \Delta, f |U} \oplus \End( \mathcal{T}_{X, \Delta, f})_{| U}
$$
and $\mathcal{T}_{X, \Delta, f}$ is locally free as a direct summand of a locally free sheaf. On $Y \backslash f^{-1}(H)$, there is an isomorphism: $\mathcal{T}_{X, \Delta, f} \simeq \mathcal{T}_Y$. By \cite[Thm 1.1]{Dru14}, the space $Y \backslash f^{-1}(H)$ is smooth. Moreover, the morphism $f \colon Y \backslash f^{-1}(H) \longrightarrow X \backslash H$ is orbi-\'etale, providing a suitable l.u.c. around $x$ to make a smooth orbi-\'etale orbistructure on $(X, \Delta)$.\\

\noindent
\textbf{Step 2: $(X, \Delta) \simeq (\PP^n/G, \Delta_G)$ }\\
By \cite[Thm. 2]{Bra21}, the log Fano pair $(X, \Delta)$ has a finite orbifold fundamental group. So, the universal covering map $\pi \colon (\widetilde{X}_\Delta, \widetilde{\Delta}) \longrightarrow (X, \Delta)$ is finite.\\

\noindent
\underline{$\widetilde{\Delta} = 0$ and $\widetilde{X}_\Delta$ is smooth:}\\
Let us denote $\alpha := -(K_X + \Delta)$ and we consider the orbibundle $\mathcal{F} := \End(\mathcal{E}_{(X, \Delta)})$. Let $f \colon Y \longrightarrow X$ be a morphism constructed in the previous step (so that its restriction to $Y \backslash f^{-1}(H)$ provides us with a morphism of the smooth orbi-\'etale orbistructure). In particular, we assume that $Y$ is maximally quasi-\'etale.

One can observe that the corresponding reflexive sheaf $\mathcal{F}_{X, \Delta, f}$ constructed in Definition~\ref{defadaptedsheaf} is $\End(\mathcal{E}_{X, \Delta, f})$. Indeed, we observe first that pullback of orbibundles commute with $\End$. Let us justify that $(.)^\inv$ commute with $\End$. Using notation of \ref{sheafofinvariants}, if $\varphi_{\alpha\beta}$ are transition maps of an orbibundle, it suffices to observe that the construction of local invariant sheaves commute with duals when they are locally free, and the transition morphisms of the dual of invariants are $\theta_{\beta\alpha}^\top$ that coincide with the restriction of $\varphi_{\beta\alpha}^\top$ to invariant sections. It follows that we have the isomorphism:
$$
\left( f^* \End(\mathcal{E}_{(X, \Delta)}) \right)^\inv = \End( f^*\mathcal{E}_{(X, \Delta)})^\inv = \End \left( \left( f^*\mathcal{E}_{(X, \Delta)} \right)^\inv \right).
$$
Finally, we have: $\mathcal{F}_{X, \Delta, f} \simeq \End \left( \mathcal{E}_{X, \Delta, f} \right)$, and this isomorphism holds on $Y$ as both sheaves are reflexive.

Now, we can observe as a consequence of \cite[Prop.~4.4]{GKP16} that this sheaf is semistable with respect to $f^* \alpha$. Moreover, it satisfies:
$$
\widehat{\ch_1}(\mathcal{F}_{X, \Delta, f}) \cdot f^* \alpha^{n-1} = \widehat{\ch_2}(\mathcal{F}_{X, \Delta, f}) \cdot f^* \alpha^{n-2} = 0.
$$
We deduce from the non-abelian Hodge correspondence for klt projective spaces that $\mathcal{F}_{X, \Delta, f}$ can be endowed with a flat holomorphic connection $\nabla$ on $Y_\reg$, that is $\mathrm{Gal}(f)$-equivariant by functoriality of the correspondence (see Theorems 3.4 and Corollary 3.8 of \cite{GKPTa19}). Hence, with notation of \S~\ref{setup}, it yields an orbiconnection $(\nu_\alpha^* \nabla)$ on the restriction of the pullback orbistructure to $Y_\reg$ (namely, the connections $\nu_\alpha^* \nabla$ are defined above ${S_\alpha}_\reg$ on the bundles $\nu_\alpha^* \mathcal{F}_{X, \Delta, f} \simeq f_\alpha^* \mathcal{F}_\alpha$). 

By descent theorems for connections \cite[Prop.~2.7]{GKPTb19}, the orbibundle $\mathcal{F}_\alpha$ is endowed with a flat connection $\nabla_\alpha$ on an open subset of $V_\alpha$ whose complement has codimension at least 2. The collection $(\nabla_\alpha)$ satifies moreover compatibility conditions on $V_{\alpha\beta}$, as it suffices to consider the conditions on $R_{\alpha\beta}$ between $f_\alpha^* \nabla_\alpha$ and $f_\beta^* \nabla_\beta$, and to take the functor of $G$-invariant sections, in order to descend them on a big open subset of $V_{\alpha\beta}$.
Because removing an analytic subset of codimension 2 does not change the fundamental group and the flat bundles are in correspondence with representations of the fundamental group, we obtain that the connection $\nabla_\alpha$ extends through the whole $V_\alpha$, and the compatibility conditions too. Finally, the orbisheaf $\mathcal{F}$ is endowed with a flat orbiconnection. Hence, by Corollary~\ref{Endcanflat}, we get that $\widetilde{\Delta} = 0$ and $\widetilde{X}_\Delta$ is smooth.\\

\noindent
\underline{$\widetilde{X}_\Delta \simeq \PP^n$:}\\
Finally, the endomorphism bundle $\End(\mathcal{E}_{\widetilde{X}_\Delta}) = \pi^* \End(\mathcal{E}_{(X, \Delta)})$ is flat (in particular, it is trivial). It follows by \cite[Prop.~3.7]{GKP22} that the bundle $\mathcal{E}_{\widetilde{X}_\Delta}$ is projectively flat. Because $\widetilde{X}_\Delta$ is simply connected, we obtained that there exists a line bundle $\mathcal{L}$ such that $\mathcal{E}_{\widetilde{X}_\Delta} = \mathcal{L}^{\oplus n+1}$. Hence, we have $c_1(\widetilde{X}_\Delta) = c_1(\mathcal{E}_{\widetilde{X}_\Delta}) = (n+1)c_1(\mathcal{L})$, and we conclude that $\widetilde{X}_\Delta \simeq \PP^n$ by \cite[Corollary of Thm 1.1]{KO73}.\\

Finally, we have an isomorphism $(X, \Delta) \simeq (\PP^n/G, \Delta_G)$ where $G = \pi_1^{\orb}(X, \Delta)$ is a finite group. \qed

\section{Examples}

In this section, we introduce examples of weighted projective spaces, and we characterize log smooth surfaces which are quotient of $\PP^2$.

\subsection{Weighted projective spaces}

Here, we study the examples of weighted projective spaces. Let $\textbf{a} = (a_0, \ldots, a_n) \in \Z_{\geq 1}^{n+1}$ be $n+1$ positive integers. There is an action of $\C^\times = \C \backslash \lbrace 0 \rbrace$ on $\C^{n+1} \backslash \lbrace 0 \rbrace$ given by:
$$
\lambda \cdot (x_i)_{0 \leq i \leq n} = (\lambda^{a_i}x_i)_{0 \leq i \leq n}
$$
The weighted projective space $\PP(\textbf{a})$ is the orbit space of this action. We denote by $\left[ x_i \right]_\textbf{a}$ the class of $(x_i)$ in $\PP(\textbf{a})$.

There is an order on $\Z_{\geq 1}^{n+1}$ defined by: $\textbf{a} | \textbf{b} $ iff $\forall 0 \leq i \leq n, a_i | b_i $.
If $\textbf{a} | \textbf{b}$, there is a map:
$$
\pi_{\textbf{a} | \textbf{b}} \colon \left[ x_i \right]_\textbf{a} \in \PP(\textbf{a}) \longmapsto \left[ x_i^{\frac{b_i}{a_i}} \right]_\textbf{b} \in \PP(\textbf{b}),
$$
which is the quotient map of the action of $\displaystyle \prod_{0 \leq i \leq n} \mu_{b_i/a_i}$ where $\mu_m$ is the group of $m$-th root of unity in $\C$. The action is given by the diagonal action:
$$
(\zeta_i)_{0 \leq i \leq n} \cdot \left[ x_i \right]_\textbf{a} := \left[ \zeta_i x_i \right]_\textbf{a}.
$$
The kernel of this action is $\displaystyle \bigcap_{0 \leq i \leq n} \mu_{\frac{b_i}{a_i}} = \mu_{\gcd (\frac{b_i}{a_i})}$. Hence, the action is effective if and only if $\gcd \left( \frac{b_i}{a_i} \right) = 1$.

If $\textbf{a} = (1, \ldots, 1)$, we have $\textbf{a} | \textbf{b}$ for any $\textbf{b}$, and $\PP(\textbf{a}) = \PP^n$. It follows that the spaces $\PP(\textbf{b})$ are quotient of $\PP^n$ by the diagonal action of roots of the unity. Hence, each $\PP(\textbf{b})$ is obtained as a finite quotient of $\PP^n$.
    
\begin{exem}\label{examplewps}
    If $\textbf{b} = (m,\ldots,m)$, we have $\PP(\textbf{b}) = \PP^n$, and the map $\pi_{\textbf{a}|\textbf{b}}$ is the quotient by the diagonal action of $(\mu_m)^{n+1}$. This action is not effective, and it can be reduced to the action of $G := (\mu_m)^n$ on $\PP^n$ given by:
    $$
    (\zeta_1, \ldots, \zeta_n) \cdot \left[ x_i \right]_{0 \leq i \leq n} = \left[ x_0 : \zeta_1 x_1 : \ldots : \zeta_n x_n \right],
    $$
    and the quotient map is given by $\left[ x_i \right] \in \PP^n \longmapsto \left[ x_i^m \right] \in \PP^n$.

    The set of fixed points is:
    $$
    \textrm{Fix}\, G := \bigcup_{g \in G \backslash \lbrace \textrm{id} \rbrace} \left\lbrace \left[ x \right] \in \PP^n | g \cdot \left[ x \right] = \left[ x \right] \right\rbrace = \bigcup_{0 \leq k \leq n} (x_k = 0).
    $$
    Hence, it follows that $\displaystyle \mathcal{P}_G = \left( \PP^n ,\sum_{0 \leq k \leq n} \left( 1 - \frac{1}{m} \right) (x_k = 0) \right)$
\end{exem}

\begin{exem}
    If $\textbf{b} = (1,1,2)$, the group $G := \mu_2$ acts on $\PP^2$ as following:
    $$
    (-1) \cdot \left[ x_0 : x_1 : x_2 \right] = \left[ x_0 : x_1 : -x_2 \right]
    $$
    The set of fixed points under this action is $\textrm{Fix}\, (-1) = (x_2 = 0) \cup \lbrace \left[ 0 : 0 : 1 \right] \rbrace$. One can show that the following morphism:
    $$
    \left[ x_0 : x_1 : x_2 \right] \in \PP^2 \longmapsto \left[ x_0^2 : x_0 x_1 : x_1^2 : x_2^2 \right] =: \left[ u : v : w : t \right] \in \PP^3 
    $$
    factors through $G$. 
    
    Moreover, it induces an isomorphism between $\PP^2 / G$ and the cone $(v^2 = uw)$ in $\PP^3$. It is a singular variety with a conical singularity in $\left[ 0 : 0 : 0 : 1 \right]$.
    
    The branching divisor $\Delta_G$ is the image of $(x_2 = 0)$ by the morphism above, it corresponds to $\Delta_1 := (v^2 = uw) \cap (t=0)$, which is a smooth conic in the projective plane $(t = 0)$. Finally, we get $\displaystyle \mathcal{P}_G = \left( (v^2 = uw) , \frac{1}{2} \Delta_1 \right)$.
\end{exem}

\subsection{Log smooth surfaces which are quotients of \texorpdfstring{$\PP^2$}{P2}}

In this section we establish the following lemma.

\begin{lemm}
    Let $(X, \Delta)$ be a log smooth surface which is a quotient of the form $\mathcal{P}_G$.
    Then, we have $X = \PP^2$, and there exists an integer $m$ such that $\Delta = \left( 1 - \frac{1}{m} \right) (H_0 + H_1 + H_2)$ where $H_i$ are hyperplanes in general position.
    In particular, $G = \mu_m \times \mu_m $, where the action is that described in Example~\ref{examplewps}, conjugated by an automorphism of $\PP^2$.
\end{lemm}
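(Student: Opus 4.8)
The plan is to identify $X$ first, then determine $\Delta$ through an orbifold Chern number computation, and finally read off $G$ from the orbifold fundamental group. Writing $\pi_G\colon \PP^2\to X$ for the quotient map, finiteness and surjectivity make $\pi_G^*$ injective on $\mathrm{Pic}(X)_{\Q}$, so $X$ has Picard number one. As $X$ is smooth (it is log smooth) and dominated by $\PP^2$, it is rational — either because all plurigenera and the irregularity vanish (Castelnuovo, using $H^\bullet(\PP^2,\mathcal{O})^G$), or because $-(K_X+\Delta)$ ample with $\Delta\geq 0$ makes $X$ of Fano type, hence rationally connected. A smooth rational surface of Picard number one carries no $(-1)$-curve, so it is minimal; the only minimal rational surface with $\rho=1$ is $\PP^2$, whence $X=\PP^2$.

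Next I would constrain $\Delta$. Write $\Delta=\sum_i(1-\tfrac1{m_i})D_i$ with the $D_i$ smooth plane curves of degree $e_i$ meeting transversally, $m_i\geq 2$, and set $\delta:=\deg\Delta=\sum_i(1-\tfrac1{m_i})e_i$; the log Fano condition forces $\delta<3$. For a log smooth pair the orbifold Chern numbers are $c_1^2(X,\Delta)=(3-\delta)^2$ and $c_2(X,\Delta)=e_{\orb}(\PP^2,\Delta)$, the latter given by the stratified (orbifold Gauss--Bonnet) count, and I would first check these agree with the paper's de Rham definition on this log smooth locus. Feeding them into the Miyaoka--Yau equality $c_1^2=3\,c_2$ — which holds because $\mathcal{P}_G$ satisfies assumption $(ii)$ — and simplifying should collapse everything to the single relation
$$3\sum_i e_i^2\Big(1-\frac{1}{m_i^2}\Big)=\delta(6-\delta).$$
Since $\delta<3$ the right side is $<9$, whereas one component of degree $e_i\geq 2$ alone contributes $e_i^2(1-1/m_i^2)\geq 4\cdot\tfrac34=3$ to the left, pushing it to $\geq 9$; so every $D_i$ must be a line.

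With all $e_i=1$ and $r$ lines, $1-\tfrac1{m_i}\geq\tfrac12$ gives $r\leq 5$. Putting $x_i=1/m_i\in(0,\tfrac12]$, the relation becomes $3(r-\sum x_i^2)=(r-\sum x_i)(6-r+\sum x_i)$; a short estimate (using $\sum x_i^2\leq\tfrac12\sum x_i$ and $\sum x_i\leq r/2$) rules out $r\in\{1,2,4,5\}$, while $r=3$ collapses to $(\sum x_i)^2=3\sum x_i^2$, forcing $m_1=m_2=m_3=:m$ by the equality case of Cauchy--Schwarz. As three SNC lines cannot be concurrent, they are in general position, so $\Delta=(1-\tfrac1m)(H_0+H_1+H_2)$. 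Finally, after an automorphism of $\PP^2$ sending the $H_i$ to the coordinate lines, $\PP^2\setminus|\Delta|\cong(\C^{\times})^2$ has abelian fundamental group and $\pi_1^{\orb}(\PP^2,\Delta)\cong(\Z/m\Z)^3/\langle\gamma_0+\gamma_1+\gamma_2\rangle\cong(\Z/m\Z)^2$; since $G=\pi_1^{\orb}(X,\Delta)$ (as in the proof of $(\mathrm{Quo})\Rightarrow(\mathrm{Unif})$), this identifies $G$ with $\mu_m\times\mu_m$ acting as in Example~\ref{examplewps}, up to the chosen conjugation.

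The main obstacle I anticipate is bookkeeping rather than conceptual: verifying that the paper's orbifold Chern numbers reduce to the log smooth stratification formulas, and cleanly discharging the finitely many line-count cases $r\leq 5$. The conceptual heart is simply the degree bound — any component of degree $\geq 2$ overshoots the Miyaoka--Yau budget $\delta(6-\delta)<9$ — after which the equal-multiplicity conclusion is exactly the equality case of Cauchy--Schwarz.
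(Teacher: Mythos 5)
Your proposal is correct and follows essentially the same route as the paper: compute the Miyaoka--Yau discriminant of the log smooth pair on $\PP^2$ via the orbifold Chern class formulas, rule out components of degree $\geq 2$, discard all line counts other than three, and conclude equal multiplicities from the equality case of Cauchy--Schwarz, finally reading off $G$ from $\pi_1^{\orb}$. The only differences are cosmetic --- you derive $X=\PP^2$ from scratch where the paper cites Barth--Hulek--Peters--Van de Ven, and your rearranged identity $3\sum_i e_i^2\left(1-\tfrac{1}{m_i^2}\right)=\delta(6-\delta)$ turns the ``all components are lines'' step into a one-line comparison with the budget $\delta(6-\delta)<9$, slightly streamlining the paper's two-stage positivity estimate --- and I checked that your deferred estimates for $r\in\{1,2,4,5\}$ do go through with the stated bounds $\sum x_i^2\leq\tfrac12\sum x_i$ and $\sum x_i\leq r/2$.
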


\begin{proof}
    First, by \cite[Example.~1.4]{BHPVdV04}, we have $X = \PP^2$. Let us denote by $\Delta = \sum_{1 \leq i \leq k} \left( 1 - \frac{1}{m_i} \right) \Delta_i$ where $\Delta_i$ is an irreducible curve of degree $d_i$, and $m_i \geq 2$. For simplicity, we set $\lambda_i := \left( 1 - \frac{1}{m_i} \right)$.
    
    We compute the discriminant of Miyaoka--Yau $\Delta_{MY} := 3 c_2(X, \Delta) - c_1(X, \Delta)^2$. Because $(X, \Delta)$ is a quotient of $\PP^2$, we have $\Delta_{MY} = 0$. We will show that this happens only for $\Delta$ as in the statement.
    
    The pair $(X, \Delta)$ is log smooth, so by \cite[Example~3.9]{GT22}, and using that $c_1(\PP^2) = -3 \left[ \omega_{FS} \right]$ and $c_2(\Omega^1_{\PP^2}) = 3 \left[ \omega_{FS} \right]^2$:
    
    \begin{align*}
        c_1(X, \Delta)^2 & = \left(-3 + \sum_{1 \leq i \leq k} \lambda_i d_i \right)^2 \\
        c_2(X, \Delta) & = 3  - 3 \sum_i \lambda_i d_i + \sum_i \lambda_i d_i^2 + \sum_{1 \leq i < j \leq k} \lambda_i\lambda_j d_i d_j \\
    \end{align*}

    We get:
    \begin{align*}
        \Delta_{MY} & = -3 \sum_i \lambda_i d_i + 3 \sum_i \lambda_i d_i^2 - \sum_{1 \leq i \leq k} \lambda_i^2 d_i^2 + \sum_{1 \leq i < j \leq k} \lambda_i\lambda_j d_i d_j \\
        & = \sum_i \lambda_i d_i \left( -3 + 3 d_i - \lambda_i d_i \right) + \sum_{1 \leq i < j \leq k} \lambda_i\lambda_j d_i d_j
    \end{align*}

    First, we can show that at least one of the $d_i$'s equals one. If not, we have the following inequality: $-3 + 3 d_i - \lambda_i d_i \geq -3 + 3 d_i - d_i = 2d_i - 3 > 0 $, and it follows that $\Delta_{MY} > 0$ that contradicts that $\Delta_{MY} = 0$.

    In fact, each $d_i$ equals one. If not, let us assume that $d_1 = \cdots = d_t = 1 < d_{t+1} \leq \cdots \leq d_k$. We have by the lower bound $d_i \geq 1$: 
    \begin{align*}
        \Delta_{MY} & \geq \sum_{1 \leq i \leq t} - \lambda_i^2 + \sum_{1 \leq i \leq k-1} \lambda_i \lambda_k d_k \\
        & + \sum_{t+1 \leq i \leq k} \lambda_i d_i \left( -3 + 3 d_i - \lambda_i d_i \right) + \sum_{1 \leq i < j \leq k-1} \lambda_i\lambda_j d_i d_j
    \end{align*}
    As $\lambda_i^2 < \lambda_i \leq \lambda_i \lambda_k d_k$, the sum of the first two sums is positive, and it follows from the previous remarks that $\Delta_{MY} > 0$.

    Now we study the discriminant according to the number $k$ of hyperplanes in $\Delta$.
    In the following, we assume up to change indices that $\lambda_1 \leq \cdots \leq \lambda_k$.
    
    If $k=1$, we have $\Delta_{MY} = - \lambda_1^2 < 0$.

    If $k = 2$, we have $\Delta_{MY} = - \lambda_1^2 - \lambda_2^2 + \lambda_1 \lambda_2 = - \left( \lambda_1 - \lambda_2 \right)^2 - \lambda_1 \lambda_2 < 0 $.

    If $k \geq 4$, one can observe that we have for $1 \leq i \leq k-1$ the following inequality: $\lambda_i^2 \leq \lambda_i\lambda_{i+1}$. Hence, we have:
    \begin{align*}
        \Delta_{MY} & > - \sum_{1 \leq i \leq k} \lambda_i^2 + \sum_{1 \leq i \leq k-1} \lambda_i \lambda_{i+1} + \lambda_k\lambda_1 + \lambda_k\lambda_2 \\
        & \geq -\lambda_k^2 + \lambda_k\lambda_1 + \lambda_k\lambda_2 = \lambda_k (\lambda_1 + \lambda_2 - \lambda_k)
    \end{align*}

    Because $\lambda_i \in \left[ \frac{1}{2} , 1 \right)$, we have $\lambda_1 + \lambda_2 - \lambda_k > 0$. Hence, $\Delta_{MY} > 0$.

    If $k = 3$, we have:
    \begin{align*}
    \Delta_{MY} & = - \lambda_1^2 - \lambda_2^2 - \lambda_3^2 + \lambda_1\lambda_2 + \lambda_1\lambda_3 + \lambda_2\lambda_3 \\
    & = - \frac{1}{2} \left( (\lambda_1 - \lambda_2)^2 + (\lambda_1 - \lambda_3)^2 + (\lambda_2 - \lambda_3)^2 \right)
    \end{align*}
    We have $\Delta_{MY} = 0$ iff $\lambda_1 = \lambda_2 = \lambda_3$, which is equivalent to $m_1 = m_2 = m_3$. Hence, $\Delta = \left( 1 - \frac{1}{m} \right) (H_0 + H_1 + H_2)$. As $\Delta$ is a simple normal crossing divisor, the three hyperplanes $H_i$ are in general position. Hence, there exists $A \in \PGL(3, \C)$ such that $A (z_i = 0) = H_i$. Hence, the quotient map induced by the action of $G = (\mu_m )^2$ described in the previous section conjugated by $A$ branches exactly along $H_i$ with multiplicity $m_i$. Hence, it follows that $\mathcal{P}_G$ is the quotient of $\PP^2$ by this action conjugated by $A$.
\end{proof}

\printbibliography

\end{document}